\theoremstyle{plain}
\newtheorem{thm}{Theorem}[section]
\newtheorem{lem}[thm]{Lemma}
\theoremstyle{definition}
\newtheorem{ex}{Example}
\numberwithin{equation}{section}
\newcommand{\vect}{}
\newcommand{\vvectu}{u_{0:d}}
\newcommand{\vvectuone}{u_{1:d}}
\newcommand{\cent}{\mathfrak{c}}
\newcommand{\origin}{\vect{o}}
\newcommand{\B}{B^d}
\newcommand{\simplex}{T}
\newcommand{\dint}{\mathrm{d}}
\newcommand{\E}{\mathbb{E}}
\newcommand{\N}{\mathbb{N}}
\renewcommand{\P}{\mathbb{P}}
\newcommand{\Q}{\mathbb{Q}}
\newcommand{\R}{\mathbb{R}}
\renewcommand{\S}{\mathbb{S}}
\newcommand{\Fc}{\mathcal{F}}
\newcommand{\Hc}{\mathcal{H}}
\newcommand{\Kc}{\mathcal{K}}
\newcommand{\Pc}{\mathcal{P}}
\newcommand{\Pcn}{\Pc_n}
\newcommand{\ck}{\mathfrak{c}}
\newcommand{\sk}{\mathfrak{s}}
\newcommand{\1}{\mathds{1}}
\newcommand{\radius}{\mathtt{r}}
\newcommand{\Radius}{\mathtt{R}}
\newcommand{\Pstraight}{\textnormal{\textsf{P}}}
\newcommand{\Pcc}{\Pc_\cent}
\newcommand{\Pns}{\Pc_{n,\sk}}
\newcommand{\Kcs}{\Kc_\sk} 
\newcommand{\Ks}{\Kc_\sk}
\newcommand{\IntMos}{\gamma^{(d)}}
\newcommand{\nmin}{n_{\min}}
\begin{document}
  
  \title{Small cells in a Poisson hyperplane tessellation}
  
  \author{
    Gilles Bonnet\thanks{
      Ruhr Universit\"at Bochum, 
      Email: gilles.bonnet@rub.de} }
  
  \date{April, 2018}
  
  \maketitle
  
  \begin{abstract}
    Until now, little was known about properties of small cells in a Poisson hyperplane tessellation.
    The few existing results were either heuristic or applying only to the two dimensional case and for very specific size functionals and directional distributions.
    This paper fills this gap by providing a systematic study of small cells in a Poisson hyperplane tessellation of arbitrary dimension, arbitrary directional distribution $\varphi$ and with respect to an arbitrary size functional $\Sigma$.
    More precisely, we investigate the distribution of the typical cell $Z$,  conditioned on the event $\{\Sigma(Z)<a\}$, where $a\to0$ and $\Sigma$ is a size functional, i.e.\ a functional on the set of convex bodies which is continuous, not identically zero, homogeneous of degree $k>0$, and increasing with respect to set inclusion.
    We focus on the number of facets and the shape of such small cells.
    We show in various general settings that small cells tend to minimize the number of facets and that they have a non degenerated limit shape distribution which depends on the size $\Sigma$ and the directional distribution.
    We also exhibit a class of directional distribution for which cells with small inradius do not tend to minimize the number of facets.
    \smallskip
    \\
    \textbf{Keywords.} Poisson hyperplane tessellation, typical cell, small cells, number of facets, shape, size functional.
    \smallskip
    \\
    \textbf{MSC.} 60D05, 52A22
  \end{abstract}
  
  \section{Introduction}
    D.G. Kendall recalled in 1987 in the foreword of \cite{StoyanKendallMecke87} a conjecture he made a few decades before about the shape of big cells in random tessellations. 
    He considered a planar stationary isotropic Poisson line tessellation and conjectured that the zero cell $Z_{\origin}$ (i.e.\ the cell containing the origin) tends do be circular, if we condition on its area $V_2(Z_{\origin})\to\infty$.
    This conjecture was later proved by Kovalenko \cite{Kovalenko1997,Kovalenko1999} and many contributions to this problem and very broad generalisations of it have been done by Miles, Goldman, Mecke, Osburg, Hug, Reitzner and Schneider.
    See Note 9 of Section 10.4 in \cite{SchneiderWeil08} for precise references.
  
    In contrast to D.G.\ Kendall's problem we are interested in the shape of small cells for which much less is known.
    Let $\eta$ be a stationary Poisson hyperplane process in $\R^d$, with $d\geq2$, of intensity measure 
    \begin{equation*}
      \gamma  \mu (\cdot)
      :=
       \gamma \int\limits_{\S^{d-1}}\int\limits_0^\infty \1 \left( H( \vect{u} ,t)\in\cdot \right) \dint t \, \dint \varphi( \vect{u} ) \,, 
    \end{equation*}
    where $\gamma>0$, $ H( \vect{u} ,t) = \{ \vect x \in \R^d : \langle \vect x , \vect u \rangle = t \} $ is the hyperplane orthogonal to $\vect u$ at distance $t$ from the origin $\origin$, and $\varphi$ is an even probability measure on $\S^{d-1}$ whose support is not contained in some great circle.
    We call $\gamma$ the intensity and $\varphi$ the directional distribution.
    We consider the typical cell $Z$ of the corresponding hyperplane tessellation.
    It will be defined in Section \ref{sec:Setting}, but we give here an intuitive description of it.
    Consider a window $t W\subset \R^d$, with $W$ a convex body containing the origin and $t>0$ large.
    Among the cells which intersect $t W$, choose one uniformly at random and translate it to the origin. 
    This random polytope converges in distribution to $Z$, as $t\to\infty$.
    We consider the following random variables:
    \begin{itemize}
      \item $f(Z)$, the \textit{number of facets} i.e.\ $(d-1)$-dimensional faces, of $Z$,
      \item $\sk(Z)$, the \textit{shape} of $Z$, which can be viewed as the congruence class of $Z$ under the scale action,
      \item $\Sigma(Z)$, the \textit{size} of $Z$, where $\Sigma$ is any real function on the set of convex bodies (convex and compact sets with non empty interior) which is
                 continuous,
                 not  identically zero,
                 homogeneous of some degree $k>0$,
                 and increasing under set inclusion ($K\subset L \Rightarrow \Sigma(K) \leq \Sigma(L)$).
    \end{itemize}
    
    Let $\nmin := {\min} \{ n\in\N : \P(f(Z)=n)>0  \}$.
    We are interested by the following questions.
    \begin{enumerate}
      \item[(Q1)] Is it true that $\P(f(Z)=\nmin \mid \Sigma(Z)<a) \to 1 $, as $a\to0$?
      \item[(Q1')] If yes, what is the speed of convergence?
      \item[(Q2)] What is the limiting distribution (if well defined) of the random variable $\sk(Z)$ conditioned on the event $\{\Sigma(Z)<a\}$, as $a\to0$?
    \end{enumerate}
    The answers to these questions turn out to strongly depend on $\Sigma$ and on the directional distribution $\varphi$ of $\eta$.
    
    
    In \cite{Miles95}, Miles considered the planar isotropic case, i.e.\ $\varphi$ is the normalized spherical Lebesgue measure.
    He gave heuristic arguments that the shape of $Z$ conditioned on $\Sigma(Z)\to0$ is a triangle with a random shape depending on $\Sigma$.
    
    In \cite{BeermannRedenbachThaele14}, Beermann, Redenbach and Th{\"a}le  considered the planar case where the support of $\varphi$ is concentrated in two couples of antipodal points. 
    In this situation all cells are parallelograms.
    This setting is particularly interesting because the behavior of small cells can change drastically depending on which size functional $\Sigma$ is considered. 
    They show that, when $\Sigma$ is the perimeter, the shape of $Z$ conditioned on $\Sigma(Z)\to0$ converges weakly to a random non-degenerated parallelogram.
    Moreover, if the two directions are distributed with equal probability, it even holds that the shape of $Z$ is independent from its perimeter.
    In contrast to these results, they also show that when $\Sigma$ is the area, the shape of $Z$ conditioned on $\Sigma(Z)\to0$ converges weakly to the shape of a random line segment, i.e.\ a degenerated parallelogram.
     
    We also want to mention that Schulte and Th\"ale in \cite[Cor. 5]{SchulteThaele16}, and Chenavier and Hemsley in \cite[Thm. 2]{ChenavierHemsley15}, both gave results, in the stationary and isotropic planar case, about the smallest cell(s) in a window of increasing size.
    Schulte and Th\"ale show that the area of the smallest triangular cell (with respect to the area) after a proper rescaling converges in distribution to a Weibull distributed random variable.
    Chenavier and Hemsley show that, for any $k\in \N$, the $k$ smallest cells (with respect to the inradius) are triangles with high probability when the window is big enough.
    
    It seems that apart from the results mentioned  above, nothing else is known about small cells.
    The present paper fills this gap by a systematic study of the small typical cell for general dimensions, directional distributions and size functionals.
    
    \bigskip
    
    In the next sections we will present general results.
    Before that, we give examples for which our results apply.
    The set of convex bodies in $\R^d$ is denoted by $\Kc$.
    For any $K\in\Kc$, we denote by $\radius(K)$ the biggest radius possible of a closed ball contained in $K$, and by $\Radius(K)$ the radius of the smallest ball containing $K$.  
    These two size functionals are of particular interest, since they give the following bounds for any size functionals $\Sigma$ of degree $k>0$,
    \begin{equation} \label{ineq:sizes}
       c \, \radius(K) 
       \leq \Sigma(K)^{\frac1k}
       \leq c \, \Radius(K) ,
       \text{ for any convex body } K, 
    \end{equation}
    where $c = \Sigma(\B)^{\frac1k}$.
    Therefore, for any Borel set $A\subset\Kc$, and $a>0$,
    \[
      \P\left( Z \in A \,,\, \Radius(Z) <  \frac{a}{c} \right)
      \leq \P\left( Z \in A \,,\, \Sigma(Z)^{\frac1k} < a \right)
      \leq \P\left( Z \in A \,,\, \radius(Z) < \frac{a}{c} \right) \,.
    \]
    
    \begin{ex}[general $\varphi$, $\Sigma^{\frac1k}\geq c \, \Radius$]
      Assume that $\Sigma^{\frac1k} \geq c \, \Radius$ for some constant $c>0$.
      Several classical size functionals satisfy this condition.
      For example: the diameter (maximal distance between two points in a convex body), the mean-width, 
      or $\Radius$. 
      We will show that
      \[
        \P\left(f(Z)=\nmin  \mid \Sigma(Z)^{\frac1k}<a\right) \to 1, \text{ as } a\to0.
      \]      
      In the trivial case where the support of $\varphi$ consists of $d$ pairs of antipodal points, all cells are parallelepipeds and therefore  
      $$ \P\left(f(Z)=\nmin  \mid \Sigma(Z)^{\frac1k}<a\right) 
      = \P\left(f(Z)=2d \right)  
      =1 \,,$$
      for any $a>0$.
      Apart from this case, we have a linear speed of convergence, that is
      \[
        \P\left(f(Z)>\nmin  \mid \Sigma(Z)^{\frac1k}<a\right) 
        \sim c' \gamma a , \text{ as } a\to0\,,
      \]
      where $c'$ is a positive constant depending only on $\varphi$ and $\Sigma$.
      Theorem \ref{thm:generalvarphi} presents a refined version of these results.
      In particular it gives the limiting distribution of $\sk(Z)$ conditioned on $\{\Sigma(Z)^{\frac1k}<a\}$.
    \end{ex}
    
    \begin{ex}[absolutely continuous $\varphi$, general $\Sigma$]
      Assume that $\varphi$ is absolutely continuous with respect to the spherical Lebesgue measure.
      This setting includes the  isotropic case.
      Theorem \ref{them:abscont1} will tell us that, for any size functional $\Sigma$ of degree $k>0$, namely
      \[
        \P\left(f(Z)=d+1 \mid \Sigma(Z)^{\frac1k}<a\right) \to 1, \text{ as } a\to0\,,
      \]
      moreover it also describes the limiting shape distribution.
      And Theorem \ref{thm:speed} gives bounds for the speed of convergence,
      \[
        c \gamma a
        \leq \P\left(f(Z)>d+1 \mid \Sigma(Z)^{\frac1k}<a\right) 
        \leq c' \gamma a \ln\left(\frac{1}{\gamma a}\right)\,,
      \]
      where the positive constants $c$ and $c'$ depend on $\Sigma$, and where the lower bound is sharp if $\Sigma=\Radius$.
      We conjecture that the upper bound is sharp when $\Sigma=\radius$.
    \end{ex}
        
    \begin{ex}[$\varphi$ with atoms, $\Sigma=\radius$]
      Finally we exhibit a class of directional distribution for which $f(Z)$ does not converge in distribution to $\nmin$ when conditioned on $\{\radius(Z)<a\}$ with $a\to0$.
     
      We assume that $\varphi$ has atoms, i.e.\ there exists $u\in\S^{d-1}$ such that $\varphi(\{u,-u\})>0$, and that the support of $\varphi$ includes $d+1$ distinct points which are not all contained in some half sphere.
      Then, Theorem \ref{them:atoms} says that
      \[
        \P\left(f(Z)=\nmin  \mid \radius(Z)<a\right) \not\to 1, \text{ as } a\to0 \,.
      \] 
    \end{ex}
    
    \begin{center}
      \begin{table}
        \label{table:examples}
        \[
          \begin{array}{|c||c|c|c|}
            \hline 
            \Sigma \quad\backslash\quad \varphi & \text{general } \varphi & \text{isotropy/abs. cont.} & \text{with atom \& condition on $\mathrm{supp}\varphi$}\\ 
            \hline \hline
            \text{general } \Sigma  &  & f(Z)\xrightarrow{d} d+1 &  \\
            \hline 
            \Radius & f(Z)\xrightarrow{d} \nmin  & f(Z)\xrightarrow{d} d+1  & f(Z)\xrightarrow{d}  \nmin \\
            \hline 
            \radius &  & f(Z)\xrightarrow{d} d+1 & f(Z)\not\xrightarrow{d} \nmin \\
            \hline 
          \end{array} 
        \]
        \caption{Answers to (Q1) depending on $\Sigma$ and $\varphi$. 
        In the right column, $\varphi$ is such that it has atoms and its support includes $d+1$ distinct points which are not all contained in some half sphere.}
      \end{table}
    \end{center}     
    \bigskip 
  
  Our paper is structured as follow.
  In Section \ref{sec:Setting} we set up the notation, define formally the typical cell and present the well known Complementary Theorem.
  In the remaining $3$ sections we study the asymptotic of small cells with respect to different conditions on the directional distribution $\varphi$. 
  In Section \ref{sec:generalvarphi} we provide results which apply for general $\varphi$.
  In Section \ref{sec:abscont} we consider the case where $\varphi$ is absolutely continuous, while Section $\ref{sec:atoms}$ focuses on a specific class of directional distributions with atoms. 
     
  \section{Preliminaries} \label{sec:Setting}
    We work in a $d$-dimensional Euclidean vector space $\R^d$, $d\geq2$, with scalar product $\langle\cdot,\cdot\rangle$, norm $\lVert\cdot\rVert$ and origin $ \origin $.
    We denote by $ B ( \vect{x} , r ) $ the closed ball and by $ S ( \vect{x} , r ) = \partial B ( \vect{x} , r ) $ the sphere with center $ \vect{x} $ and radius $ r $, by  $ \B = B ( \origin , 1 ) $ the unit ball and  by $ \S^{d-1} = \partial \B $ the unit sphere.
    Let $\Hc $ be the space of affine hyperplanes in $\R^d$ with its usual topology and Borel structure, see \cite{SchneiderWeil08}.
    Every hyperplane $H\in\Hc $, which does not contain the origin, has a unique representation 
    \[
      H( \vect{u} ,t)
      :=\{ \vect{x}\in\R^d :\  \langle \vect{x}, \vect{u} \rangle=t\} \,,
    \]
    with $ \vect{u} \in \S^{d-1}$ and $t>0$.
    For such hyperplane, we denote by $H^-$ (resp. $H^+$) the halfspace supported by $H$ containing the origin (resp. not containing the origin).
    
    
    Let $\Kc $ be the set of convex bodies (compact convex sets of $\R^d$ with non-empty interior).
    We write $\Pc $ for the set of all polytopes, and $f(P)$ for the number of facets of a polytope $P \in \Pc$.
    Denote by $\Pcn =\{P \in \Pc : f(P)=n \} $ the set of $n$-topes, hence $\Pcn \subset \Pc \subset\Kc$.
    For any $ t > 0 $, and $ K , L \in \Kc $ we define
    \[
      t K := \{ t \vect{x} :\  \vect{x} \in K \},
      \quad
      K + L := \{ \vect{x} + \vect{y} :\  \vect{x} \in K , \vect{y} \in L \} \,,
    \]
    where the latter is the Minkowski sum of $K$ and $L$.
    
    The sets $\Kc$, $\Pc$, and $\Pcn$ are equipped with the Hausdorff distance $d_H$, 
    \[
      d_H(K,L) = \min \{r \geq 0 :  K\subset L+rB^d,\ L \subset K+rB^d \} \,,
    \]
    and with the associated topology and Borel structure.

    As in \cite{HugSchneider2007}, a functional $\Sigma:\Kc\to\R$ is called size functional of degree $k>0$ if it is 
       continuous,       
       not  identically zero,
       homogeneous of some degree $k$ meaning that $\Sigma(tK)=t^k \Sigma(K)$,
       and increasing under set inclusion ($K\subset L \Rightarrow \Sigma(K) \leq \Sigma(L)$).
       Note that this definition implies that $ \Sigma $ is positive.
      For any $K\in\Kc$, we denote by $\radius(K)$ the biggest radius possible of a closed ball contained in $K$, and by $\Radius(K)$ the radius of the smallest ball containing $K$.
      These two size functionals are called inradius and circumradius respectively.

    Let $\eta$ be a stationary Poisson hyperplane process in $\R^d$, that is a Poisson point process in the space $\Hc $ whose intensity measure is of the form 
    \begin{equation}\label{def:Thetamu}
      \E \, \eta ( \cdot )
      = \gamma  \mu (\cdot)
      := \gamma \int\limits_{\S^{d-1}}\int\limits_0^\infty \1 \left( H( \vect{u} ,t)\in\cdot \right) \dint t \, \dint \varphi( \vect{u} ) \,, 
    \end{equation}
    where $ \gamma >0$, $\mu$ is a measure  on $\Hc $, $\varphi$ is an even probability measure on $\S^{d-1}$ and $\1(\cdot)$ is the indicator function.
    We call $ \gamma $ the {intensity} and $\varphi$ the \textit{directional distribution} of the hyperplane process $\eta$.
    We assume that the support of $\varphi$ is not contained in a great circle of $\S^{d-1}$.
    When $\varphi$ is the normalized spherical Lebesgue measure on $\S^{d-1}$, we say that $\eta$ is {isotropic}.
    
    The closure of each of the connected components of the complement of the union $\bigcup_{H\in\eta} H$ is almost surely a polytope (because the support of $\varphi$ is not contained in a great circle).
    We denote by $X$ the collection of these polytopes.
    It is called the Poisson hyperplane tessellation induced by $\eta$, and the polytopes are called the cells of $X$.
    We can see $X$ as a point process in $\Pc$.
    To describe the distribution of $X$ we need the notion of the \textit{$\Phi$-content} of a convex body $K$ which is given by
    \[
      \Phi(K)
      := \int\limits_{\S^{d-1}} h(K, \vect{u} ) \,\dint \varphi(\vect{u} ) \,,
    \]
    where $h(K, \vect{u} ):=\max\{\langle \vect{x}, \vect{u} \rangle:\  \vect{x}\in K\}$ is the value of the {support function} of $K$ at $ \vect{u} $.
%
    The definition of $\Phi$ is motivated by the fact that, for any $K\in\Kc $,
    \[
      \P (\eta\cap K=\emptyset)
      = e^{- \gamma \Phi(K)} \,,
    \]
    where $ \eta $ is identified with the union $ \cup_{H\in\eta} H $.
    
    Note that $\Phi$ is homogeneous of degree $1$ and translation invariant, meaning that $\Phi( t K + \vect{x} )  = t \Phi(K)$ for any $ K \in \Kc $, $t \geq 0 $ and $ \vect{x} \in \R^d $.
In the isotropic case, the $\Phi$-content of a convex set $K$ is half of the mean-width and up to a constant the first intrinsic volume $V_1(K)$.
    Note that $K \in \Kc$ implies that  $\Phi(K)>0$ since $K$ contains at least $2$ points.
    
    Let $\cent:\Kc\to\R^d$ be a \textit{center function}, i.e.\ a measurable map compatible with translations and homogeneous under the scale action: 
    $
      \cent( t K+ \vect{x})=t\cent(K)+ \vect{x} ,
    $
    for any $ t \in (0,\infty) $ and any $ \vect{x} \in \R^d $.
    For example, $\cent$ can be the center of mass.
    We set
    $
      \mathcal{P}_{\cent}
      := \{ P\in\mathcal{P} : \cent (P)= \origin \} 
    $.
    Due to the natural homeomorphism
    \begin{eqnarray*}
      \Pc & \to & \R^d \times \Pcc\\
      P & \mapsto & \left( \cent (P), P - \cent(P) \right),
    \end{eqnarray*}
    we will consider from now on $X$ as a {germ-grain process} in $\R^d$ with {grain space} $\Pcc$, see \cite{SchneiderWeil08} for a definition of this concept.
    Since $\eta$ is stationary, this is also the case for $X$.
    This implies the existence of a probability measure $\Q$ on $\Pcc$ such that the intensity measure of the germ-grain process $X$ decomposes into $\Q$ and Lebesgue measure $\lambda_d$, that is
    \begin{equation}
      \label{eq:intSplit}
      \E\, X( \{ P-\cent(P) \in C\,,\,  \cent (P) \in A \}) =
      \IntMos  \lambda_d(A)\,  \Q(C)  \,,
    \end{equation}
    for $C \subset \Pcc$ and $A\subset\R^d$, where $ \IntMos   = \E\, X( \{ P \in \Pc\,,\, \cent (P) \in [0,1]^d \}) $.
    The probability measure $\Q$ and the constant $\IntMos$ are called, respectively, the {grain distribution} and the {intensity of $X$}.
    It is easy to see that $\IntMos$ is a multiple of $\gamma^d$, where $\gamma$ is the intensity of the Poisson hyperplane process.
    A random centred polytope $Z\in\Pcc$ with distribution $\Q$ is called a {typical cell} of~$X$.
    
   For a convex body $K$, we define its shape to be
    \[
      \sk(K) := \frac{1}{\Phi(K)} (K-\ck(K)) \,,
    \]
    i.e.\ its translated and normalized copy of center $\origin$ and $\Phi$-content $1$.
    We denote
    \[
      \Ks := \sk(\Kc) = \{ K\in\Kc : \ck(K) =\origin \,,\, \Phi(K)=1  \} \,,
    \]
    and similarly $\Pc_{\sk} := \sk(\Pc)$ and $\Pc_{n,\sk} := \sk(\Pc_n)$.
    
    The complementary theorem is a well-known result which shows how the shape and the $\Phi$-content are essential characteristics of the typical cell.
    It will be an important tool in the next section.
    This theorem holds in much more general settings.
    See \cite{Miles1971Complementary, MollerZuyev1996, Cowan2006, BaumstarkLast2009} for general versions of this theorem, or \cite{BonnetCalkaReitzner16} for a direct proof in the present setting.
   
    \begin{thm}[Complementary theorem]
      Let $ n \in \N $ be such that $ \P ( f(Z) = n ) > 0 $.
      If we condition the typical cell $Z$ to have $n$ facets, then
      \begin{enumerate}
        \item $ \Phi (Z) $ and $ \sk(Z) $ are independent random variables, 
        \item $ \Phi (Z) $ is $\Gamma_{\gamma, n-d}$ distributed, i.e.\ for any $ a \geq 0 $,
        $$\P ( \Phi (Z) \leq a ) = \frac{ \gamma^{n-d} }{(n-d-1)!} \int_0^a t^{ n - d - 1 } e^{ - \gamma t } \dint t \,.$$ 
      \end{enumerate}      
    \end{thm}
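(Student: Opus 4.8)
\emph{Proof idea.}
Fix $n\in\N$ with $\P(f(Z)=n)>0$; since a $d$-dimensional polytope has at least $d+1$ facets, $n\ge d+1$. The plan is to describe the cells of $X$ with exactly $n$ facets through the $n$ hyperplanes of $\eta$ carrying their facets, and then to apply the multivariate Mecke equation to $\eta$ (which has intensity measure $\gamma\mu$). For pairwise distinct $H_1,\dots,H_n\in\Hc$ and a sign vector $\epsilon\in\{+,-\}^n$ write $C:=C(H_1,\dots,H_n,\epsilon)=\bigcap_{i=1}^n H_i^{\epsilon_i}$, and let $\Delta(H_1,\dots,H_n,\epsilon)\in\{0,1\}$ indicate that $C$ is a bounded polytope with exactly $n$ facets, one on each $H_i$. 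A bounded full-dimensional polytope $P$ with $f(P)=n$ is a cell of $X$ precisely when its $n$ facet hyperplanes, taken with the sides on which $P$ lies, belong to $\eta$ and no other hyperplane of $\eta$ meets $\mathrm{int}(P)$. Since a polytope equals the intersection of the halfspaces bounded by its facet hyperplanes, and these hyperplanes do not meet its interior, one gets for every measurable $g\colon\Pc\to[0,\infty)$, with $\eta$ identified with $\bigcup_{H\in\eta}H$,
\[
  \sum_{P\in X,\,f(P)=n} g(P)
  =\frac1{n!}\sum_{(H_1,\dots,H_n)\in\eta^n_{\neq}}\ \sum_{\epsilon\in\{+,-\}^n}
   \Delta(H_1,\dots,H_n,\epsilon)\,\1\bigl(\eta\cap\mathrm{int}(C)=\emptyset\bigr)\,g(C),
\]
the factor $1/n!$ accounting for the orderings of the facet hyperplanes of $P$.

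Apply this with $g(P)=h(\sk P)\,\1(\Phi(P)\le a)\,\1(f(P)=n)\,\1(\ck(P)\in[0,1]^d)$ for a bounded measurable $h\colon\Ks\to[0,\infty)$, take expectations, and invoke the multivariate Mecke equation. The quantities $\ck(C),\Phi(C),\sk(C),\Delta$ depend only on $(H_1,\dots,H_n,\epsilon)$ and not on $\eta$; re-inserting $H_1,\dots,H_n$ into $\eta$ leaves the void indicator unchanged because none of them meets $\mathrm{int}(C)$; and $\P(\eta\cap\mathrm{int}(C)=\emptyset)=\P(\eta\cap C=\emptyset)=e^{-\gamma\Phi(C)}$, since $C$ and $\mathrm{int}(C)$ differ only by the support hyperplanes of $C$, which form a $\mu$-null set. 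Combined with \eqref{eq:intSplit}, this gives
\[
  \IntMos\,\E\bigl[h(\sk Z)\,\1(\Phi Z\le a)\,\1(f Z=n)\bigr]
  =\frac{\gamma^n}{n!}\int_{\Hc^n}\sum_{\epsilon}\Delta\;e^{-\gamma\Phi(C)}\,h(\sk C)\,\1(\Phi C\le a)\,\1(\ck C\in[0,1]^d)\;\mu^n\bigl(\dint(H_1,\dots,H_n)\bigr).
\]

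The core of the argument is to disintegrate this $\mu^n$-integral along $C=\ck(C)+\Phi(C)\,\sk(C)$, i.e.\ according to the centre $\ck(C)\in\R^d$, the $\Phi$-content $r=\Phi(C)\in(0,\infty)$ and the shape $S=\sk(C)\in\Pc_{n,\sk}$. Since $\varphi$ is even, $\mu$ is exactly translation invariant and $1$-homogeneous under dilations, that is $\mu(A+x)=\mu(A)$ and $\mu(tA)=t\,\mu(A)$. Let $\Lambda$ be the image of $\tfrac1{n!}\,\mu^n\otimes(\text{counting measure on }\{+,-\}^n)$, restricted to $\{\Delta=1\}$, under $(H_1,\dots,H_n,\epsilon)\mapsto(\ck C,\Phi C,\sk C)$; it is a $\sigma$-finite measure on $\R^d\times(0,\infty)\times\Pc_{n,\sk}$. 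Translating all hyperplanes by $x\in\R^d$ translates $\ck C$ by $x$, fixes $\Phi C$ and $\sk C$, and preserves $\mu^n$, so $\Lambda=\lambda_d\otimes\Lambda'$ with $\Lambda'$ a measure on $(0,\infty)\times\Pc_{n,\sk}$. Dilating all hyperplanes by $t>0$ multiplies $\ck C$ and $\Phi C$ by $t$, fixes $\sk C$, and transforms $\mu^n$ into $t^{-n}\mu^n$; together with $(x\mapsto tx)_\ast\lambda_d=t^{-d}\lambda_d$ this forces $\Lambda'(sI\times B)=s^{\,n-d}\Lambda'(I\times B)$ for all $s>0$ and all Borel $I\subset(0,\infty)$, $B\subset\Pc_{n,\sk}$. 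As a $\sigma$-finite measure $\nu$ on $(0,\infty)$ with $\nu(sI)=s^{\,n-d}\nu(I)$ for all $s>0$ must be a multiple of $r^{\,n-d-1}\dint r$, we obtain $\Lambda'=\bigl(r^{\,n-d-1}\dint r\bigr)\otimes\rho$ for some measure $\rho$ on $\Pc_{n,\sk}$. (Equivalently, $n-d-1$ is the Jacobian exponent of the change of variables from the $n$ facet offsets of $C$ to its centre, its $\Phi$-content, and its shape, the shape component being $(n-d-1)$-dimensional.) Substituting $\Phi C=r$, $\sk C=S$ and using $\lambda_d([0,1]^d)=1$ yields the master identity
\[
  \IntMos\,\E\bigl[h(\sk Z)\,\1(\Phi Z\le a)\,\1(f Z=n)\bigr]
  =\gamma^n\left(\int_0^a r^{\,n-d-1}e^{-\gamma r}\,\dint r\right)\left(\int_{\Pc_{n,\sk}}h(S)\,\rho(\dint S)\right).
\]

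The right-hand side is a product of a function of $a$ with a function of $h$, which is exactly the statement that, conditionally on $\{f(Z)=n\}$, the variables $\Phi(Z)$ and $\sk(Z)$ are independent. Taking $h\equiv1$ and $a=\infty$, and using $\int_0^\infty r^{\,n-d-1}e^{-\gamma r}\dint r=(n-d-1)!\,\gamma^{-(n-d)}$, gives $\IntMos\,\P(f(Z)=n)=\gamma^d(n-d-1)!\,\rho(\Pc_{n,\sk})$; since the left side is positive and finite, $\rho(\Pc_{n,\sk})\in(0,\infty)$, so we may divide, obtaining
\[
  \P\!\left(\Phi(Z)\le a\mid f(Z)=n\right)=\frac{\gamma^{\,n-d}}{(n-d-1)!}\int_0^a r^{\,n-d-1}e^{-\gamma r}\,\dint r,
\]
i.e.\ $\Phi(Z)$ is $\Gamma_{\gamma,n-d}$-distributed given $\{f(Z)=n\}$ (and the conditional law of $\sk(Z)$ is the normalized measure $\rho(\cdot)/\rho(\Pc_{n,\sk})$). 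The Mecke computation and the void-probability identity are routine; the main obstacle is the disintegration in the third paragraph, where the evenness of $\varphi$ (providing the \emph{exact} invariances of $\mu$) and the rigidity of homogeneous measures on the half-line are essential. General versions of this statement are available in \cite{Miles1971Complementary,MollerZuyev1996,Cowan2006,BaumstarkLast2009}, and a direct proof in the present setting in \cite{BonnetCalkaReitzner16}.
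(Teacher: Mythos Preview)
The paper does not prove the Complementary Theorem; it states the result and refers to \cite{Miles1971Complementary,MollerZuyev1996,Cowan2006,BaumstarkLast2009} for general versions and to \cite{BonnetCalkaReitzner16} for a direct proof in the present setting. So there is nothing in the paper to compare your argument against line by line.

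That said, your sketch is correct and is exactly the direct route the paper points to: enumerate $n$-faceted cells via their $n$ facet hyperplanes and side indicators, apply the multivariate Mecke formula for $\eta$, use the void probability $\P(\eta\cap C=\emptyset)=e^{-\gamma\Phi(C)}$, and then disintegrate the image of $\mu^n$ along $(H_1,\dots,H_n,\epsilon)\mapsto(\ck C,\Phi C,\sk C)$. The key step---exploiting the translation invariance and $1$-homogeneity of $\mu$ (both guaranteed by the evenness of $\varphi$ in the given parametrization) to force the factorization $\Lambda=\lambda_d\otimes(r^{\,n-d-1}\dint r)\otimes\rho$---is carried out correctly, including the scaling bookkeeping $(d_t)_\ast\mu^n=t^{-n}\mu^n$ against $(d_t)_\ast\lambda_d=t^{-d}\lambda_d$ that produces the exponent $n-d-1$. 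The Gamma law and the conditional independence of $\Phi(Z)$ and $\sk(Z)$ then follow immediately from the product form. Your closing citations coincide with those the paper gives in lieu of a proof.
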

    
    We conclude this section by presenting standard notation for asymptotic approximations.
    For two given functions $f$ and $g$, we write $f(a)\sim g(a)$, as $a\to0$, if $\lim_{a\to0} \frac{f(a)}{g(a)} =1$; and $f(a)=o(g(a))$, as $a\to0$, if $\lim_{a\to0} \frac{f(a)}{g(a)} =0$.
    
  \section{General directional distribution}
  \label{sec:generalvarphi}
    Because of the second point of complementary theorem, the probability $\P(f(Z)=n \,,\, \Phi(Z)<a)$ is of order $a^{n-d}$ as $a\to0$.
    This implies that cells with small $\Phi$-content have $\nmin  = \min \{ n : \P(f(Z)=n)>0 \}$ facets with high probability.
    This fact combined with the first point of the complementary theorem gives us that $\sk(Z)$ conditioned on $\{\Phi(Z)<a\}$ converges in distribution, as $a\to0$, to $\sk(Z)$ conditioned on the event $\{f(Z)=\nmin \}$.
    The next theorem generalizes these properties to any size functional of the same order as $\Phi$, for example  the diameter, the mean-width, the first intrinsic volume or $\Radius$.
    It also gives a speed of convergence.
    In order to state the theorem we need to introduce measures on $\Kcs$ which are concentrated on the spaces $\Pns$. 
    We set $\mu_{n,\sk}(\cdot) := \P(f(Z)=n \,,\, \sk(Z)\in\cdot )$.
    This definition is motivated by the fact that, by the complementary theorem, we have for any Borel set $A\subset \Kc$,
    \begin{equation}\label{eq:muns}
      \P(f(Z)=n \,,\, Z\in A)
      = \frac{\gamma^{n-d}}{(n-d-1)!} \int_{\Pns} \int_0^\infty \1(tP\in A) e^{-\gamma t} t^{n-d-1} \,\dint t \,\dint \mu_{n,\sk}(P) \,.
    \end{equation}
    Recall that we say that a size functional $\Sigma$ is of degree $k$ when $\Sigma(tK)=t^k \Sigma(K)$, for any $t>0$ and $K\in\Kc$.
    For any size functional $\Sigma$ of degree $k$, we also consider a $\Sigma$-weighted version of $\mu_{n,\sk}$:
    \[
      \mu_{n,\sk,\Sigma}(\cdot)
      = \frac{1}{(n-d)!}\int_{\Pns} \1(P\in\cdot) \Sigma(P)^{-\frac{n-d}k} \,\dint \mu_{n,\sk}(P) \,.
    \]
    Note that $\mu_{n,\sk} = (n-d)!\, \mu_{n,\sk,\Phi}$.
    \begin{thm} \label{thm:generalvarphi}
      Assume that $\Sigma$ is a size functional homogeneous of degree $k$, and that there exists $c>0$ such that $c \, \Phi(K) \leq  \Sigma(K)^{\frac1k}$, for any $K\in\Kc$.
      Then, for any Borel set $S\subset\Kcs$ 
      \[
        \P\left(\sk(Z)\in S \mid \Sigma(Z)^{\frac1k}<a \right) \to \frac{\mu_{\nmin ,\sk,\Sigma}(S)}{\mu_{\nmin ,\sk,\Sigma}(\Kcs)} \,,
      \]
      as $a\to0$.
      In particular
      \[
        \P\left(f(Z)=\nmin  \mid \Sigma(Z)^{\frac1k}<a\right) \to 1 \,,
      \]
      as $a\to0$.
      Moreover, for any Borel set $ S \subset \Kcs $ for which $ \mu_{ \nmin , \sk , \Sigma } ( S ) > 0 $ and  $ \mu_{ \nmin + 1 , \sk , \Sigma } ( S ) > 0 $, it holds that
      \[
        \P\left(f(Z)>\nmin  \,,\, \sk(Z)\in S \mid \Sigma(Z)^{\frac1k}<a\right) \sim \gamma a \, \frac{\mu_{\nmin +1,\sk,\Sigma}(S)}{\mu_{\nmin ,\sk,\Sigma}(S)} \,,
      \]
      as $ a \to 0 $.
    \end{thm}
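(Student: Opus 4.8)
The plan is to reduce everything to the formula \eqref{eq:muns} and to control the asymptotics of the integrals
\[
  I_n(a) := \P\left(f(Z)=n \,,\, \sk(Z)\in S \,,\, \Sigma(Z)^{\frac1k}<a\right)
\]
for $n = \nmin, \nmin+1, \dots$ as $a\to 0$. First I would use the homogeneity of $\Sigma$ of degree $k$: for $P\in\Pns$ one has $\Sigma(tP)^{1/k} = t\,\Sigma(P)^{1/k}$, so $\1(\Sigma(tP)^{1/k}<a) = \1(t < a\,\Sigma(P)^{-1/k})$. Plugging this into \eqref{eq:muns} and performing the inner $t$-integral over $(0,a\Sigma(P)^{-1/k})$ against $e^{-\gamma t}t^{n-d-1}\,\dint t$, I expand $e^{-\gamma t} = 1 - \gamma t + O(t^2)$ to get, as $a\to 0$,
\[
  \int_0^{a\Sigma(P)^{-1/k}} e^{-\gamma t}t^{n-d-1}\,\dint t
  = \frac{(a\Sigma(P)^{-1/k})^{n-d}}{n-d} - \gamma\,\frac{(a\Sigma(P)^{-1/k})^{n-d+1}}{n-d+1} + O(a^{n-d+2}).
\]
Integrating against $\dint\mu_{n,\sk}(P)$ and recognizing the definition of $\mu_{n,\sk,\Sigma}$ (note $\mu_{n,\sk,\Sigma}(\cdot) = \frac1{(n-d)!}\int \1(P\in\cdot)\Sigma(P)^{-(n-d)/k}\dint\mu_{n,\sk}(P)$, so the leading term is exactly $\gamma^{n-d}a^{n-d}\mu_{n,\sk,\Sigma}(S)$), this yields
\[
  I_n(a) = \gamma^{n-d}a^{n-d}\,\mu_{n,\sk,\Sigma}(S) - \gamma^{n-d+1}a^{n-d+1}\,\mu_{n+1\text{-style correction}} + O(a^{n-d+2}),
\]
where the second-order coefficient is $\frac{\gamma^{n-d+1}a^{n-d+1}}{(n-d+1)}\cdot\frac{1}{(n-d)!}\int_S \Sigma(P)^{-(n-d+1)/k}\dint\mu_{n,\sk}(P)$; I would simplify it so that after summing over $n$ the $\nmin+1$ contributions assemble correctly.

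Next I would assemble the pieces. Summing $I_n(a)$ over all admissible $n\ge\nmin$ gives the numerator $\P(\sk(Z)\in S,\,\Sigma(Z)^{1/k}<a)$, and taking $S=\Kcs$ gives the denominator $\P(\Sigma(Z)^{1/k}<a)$. Because $\mu_{n,\sk,\Sigma}$ is a finite measure for each $n$ (this needs the hypothesis $c\Phi(K)\le\Sigma(K)^{1/k}$, which via $\mu_{n,\sk}=(n-d)!\,\mu_{n,\sk,\Phi}$ and $\Sigma(P)^{-(n-d)/k}\le c^{-(n-d)}\Phi(P)^{-(n-d)} = c^{-(n-d)}$ on $\Pns$ bounds $\mu_{n,\sk,\Sigma}(\Kcs)\le \frac{c^{-(n-d)}}{(n-d)!}\mu_{n,\sk}(\Kcs)\le \frac{c^{-(n-d)}}{(n-d)!}$ and makes the sum over $n$ absolutely convergent), the dominant term as $a\to0$ is the one with the smallest exponent $n-d$, namely $n=\nmin$, contributing $\gamma^{\nmin-d}a^{\nmin-d}\mu_{\nmin,\sk,\Sigma}(S)$. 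Dividing numerator by denominator, all factors $\gamma^{\nmin-d}a^{\nmin-d}$ cancel and the ratio tends to $\mu_{\nmin,\sk,\Sigma}(S)/\mu_{\nmin,\sk,\Sigma}(\Kcs)$, which is the first assertion; taking $S = \Pc_{\nmin,\sk}$ (equivalently, noting the limit measure is supported on $\nmin$-topes) gives $\P(f(Z)=\nmin\mid\Sigma(Z)^{1/k}<a)\to1$.

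For the refined second-order statement, I would write, for $S$ with $\mu_{\nmin,\sk,\Sigma}(S)>0$ and $\mu_{\nmin+1,\sk,\Sigma}(S)>0$,
\[
  \P\left(f(Z)>\nmin,\,\sk(Z)\in S \mid \Sigma(Z)^{1/k}<a\right)
  = \frac{\sum_{n>\nmin}I_n(a)}{\sum_{n\ge\nmin}I_n(a)}.
\]
In the numerator the leading term as $a\to0$ comes from $n=\nmin+1$, giving $\gamma^{\nmin+1-d}a^{\nmin+1-d}\mu_{\nmin+1,\sk,\Sigma}(S) + o(a^{\nmin+1-d})$ — here I must check that the $n=\nmin$ term genuinely does not contribute to "$f(Z)>\nmin$", which is automatic since that term lives on $\{f(Z)=\nmin\}$. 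In the denominator the leading term is $\gamma^{\nmin-d}a^{\nmin-d}\mu_{\nmin,\sk,\Sigma}(S)$ (using $S$, not $\Kcs$, because the conditioning event in this assertion restricts $\sk(Z)\in S$). Taking the ratio, one power of $\gamma a$ survives and the limit is $\gamma a\,\mu_{\nmin+1,\sk,\Sigma}(S)/\mu_{\nmin,\sk,\Sigma}(S)$, as claimed.

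The main obstacle I anticipate is making the interchange of sum over $n$ and limit in $a$ rigorous, i.e. producing a uniform tail bound $\sum_{n\ge N}I_n(a) = O(a^{N-d})$ with constants independent of $a$; the clean bound $\mu_{n,\sk,\Sigma}(\Kcs)\le c^{-(n-d)}/(n-d)!$ above makes $\sum_n \gamma^{n-d}a^{n-d}\mu_{n,\sk,\Sigma}(\Kcs)$ dominated termwise by $\sum_n (\gamma a/c)^{n-d}/(n-d)! = e^{\gamma a/c}$, so this is controllable but must be stated carefully so that the error terms in each $I_n(a)$ also sum absolutely. A secondary point to handle is the degenerate case where $\nmin$ is the unique admissible value (the support of $\varphi$ consisting of $d$ antipodal pairs), in which $\mu_{\nmin+1,\sk,\Sigma}\equiv0$ and the second-order statement is vacuous — this matches the parallelepiped case mentioned in the examples and requires no separate argument.
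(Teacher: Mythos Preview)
Your approach is essentially the paper's: both start from \eqref{eq:muns}, use the homogeneity of $\Sigma$ to rewrite the inner integral as $\int_0^{a\Sigma(P)^{-1/k}} e^{-\gamma t}t^{n-d-1}\,\dint t$, and invoke the hypothesis $c\,\Phi\le\Sigma^{1/k}$ to obtain the tail bound $\mu_{n,\sk,\Sigma}(\Kcs)\le c^{-(n-d)}/(n-d)!$, which makes $\sum_n(\gamma a)^{n-d}\mu_{n,\sk,\Sigma}(\Kcs)$ absolutely summable.

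The paper organizes the argument more cleanly. It isolates a preliminary lemma: after substituting $s=t/a$, the quantity $(\gamma a)^{-(n-d)}\P(f(Z)=n,\,\sk(Z)\in S,\,\Sigma(Z)^{1/k}<a)$ equals
\[
  \frac{1}{(n-d-1)!}\int_{\Pns}\1(P\in S)\int_0^{\Sigma(P)^{-1/k}} e^{-\gamma a s}s^{n-d-1}\,\dint s\,\dint\mu_{n,\sk}(P),
\]
which increases monotonically to $\mu_{n,\sk,\Sigma}(S)$ as $a\downarrow 0$. A single application of monotone convergence thus delivers both the limit \emph{and} the uniform bound $\le\mu_{n,\sk,\Sigma}(S)$ valid for every $a>0$; that uniform bound is exactly what is needed to interchange sum over $n$ and limit in $a$, and it sidesteps all the Taylor-remainder bookkeeping you flag as your ``main obstacle''. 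Your expansion would also go through (the remainders are uniform since $a\,\Sigma(P)^{-1/k}\le a/c$ on $\Pns$), but it is heavier machinery than required for a leading-order statement.

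There is one genuine slip in your second-order computation. In
\[
  \P\!\left(f(Z)>\nmin,\ \sk(Z)\in S \ \Big|\ \Sigma(Z)^{1/k}<a\right)
\]
the conditioning is on $\{\Sigma(Z)^{1/k}<a\}$ alone; the event $\{\sk(Z)\in S\}$ sits to the left of the bar and belongs only to the numerator. Hence the denominator is $\P(\Sigma(Z)^{1/k}<a)$, whose leading term involves $\mu_{\nmin,\sk,\Sigma}(\Kcs)$, not $\mu_{\nmin,\sk,\Sigma}(S)$, and your displayed ratio $\sum_{n>\nmin}I_n(a)\big/\sum_{n\ge\nmin}I_n(a)$ with $I_n$ carrying the constraint $\sk(Z)\in S$ in both numerator and denominator is not the conditional probability in question. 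The paper's derivation correctly carries $\Kcs$ in the intermediate ratio; the $S$ appearing in the denominator of its final displayed asymptotic is evidently a typo for $\Kcs$.
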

    
    In order to prove the theorem above, we need the following lemma.
    \begin{lem} \label{lem:limSigma}
      Let $S\subset \Kcs$ be a Borel set and $\Sigma$ be a size functional of degree $k$.
      We have
      \[
        (\gamma a)^{-(n-d)} \P\left(f(Z)=n \,,\, \sk(Z)\in S \,,\, \Sigma(Z)^{\frac1k}<a\right)
        \begin{cases}
          \to \mu_{n,\sk,\Sigma}(S) &\text{ as } a\to0 ,\\
          \leq \mu_{n,\sk,\Sigma}(S) &\text{ for any }a>0\,.
        \end{cases}
      \]
    \end{lem}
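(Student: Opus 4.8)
The plan is to read off the left-hand side from the complementary theorem and reduce everything to an elementary one-dimensional integral. Since $\nmin \ge d+1$, we may assume $n \ge d+1$ (for $n \le d$ one has $\P(f(Z)=n)=0$, hence $\mu_{n,\sk}=0$ and both sides vanish); set $m := n-d \ge 1$. Apply \eqref{eq:muns} with the Borel set $A = A_a := \{K \in \Kc : \sk(K) \in S,\ \Sigma(K)^{\frac1k} < a\}$. For $P \in \Pns$ and $t>0$ we have, by scale invariance of $\sk$, that $\sk(tP) = \sk(P) = P$, and, by homogeneity of $\Sigma$ of degree $k$, that $\Sigma(tP)^{\frac1k} = t\,\Sigma(P)^{\frac1k}$; hence $\1(tP \in A_a) = \1(P \in S)\,\1\!\left(t < a\,\Sigma(P)^{-\frac1k}\right)$. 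Plugging this into \eqref{eq:muns} gives
\[
  \P\!\left(f(Z)=n,\ \sk(Z) \in S,\ \Sigma(Z)^{\frac1k} < a\right)
  = \frac{\gamma^{m}}{(m-1)!} \int_{\Pns} \1(P \in S) \left( \int_0^{a\,\Sigma(P)^{-1/k}} e^{-\gamma t} t^{m-1}\,\dint t \right) \dint\mu_{n,\sk}(P).
\]

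Dividing by $(\gamma a)^{m}$, the statement reduces to analysing $\psi_a(P) := a^{-m}\int_0^{a\,\Sigma(P)^{-1/k}} e^{-\gamma t} t^{m-1}\,\dint t$ as $a \to 0$. From $0 < e^{-\gamma t} \le 1$ on the interval of integration one gets $\psi_a(P) \le a^{-m}\int_0^{a\,\Sigma(P)^{-1/k}} t^{m-1}\,\dint t = \Sigma(P)^{-m/k}/m$ for \emph{every} $a>0$; integrating this against $\mu_{n,\sk}$ over $S$ and using $\tfrac{1}{(m-1)!}\cdot\tfrac1m = \tfrac1{m!}$ yields $(\gamma a)^{-m}\,\P(\cdots) \le \mu_{n,\sk,\Sigma}(S)$, which is the second (non-asymptotic) assertion and which holds whether or not $\mu_{n,\sk,\Sigma}(S)$ is finite. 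For the matching lower estimate, note that for each fixed $P \in \Pns$ the value $\Sigma(P)$ lies in $(0,\infty)$ because $\Sigma$ is positive on $\Kc$; hence $e^{-\gamma t} \ge e^{-\gamma a\,\Sigma(P)^{-1/k}}$ on the interval of integration, giving $\psi_a(P) \ge e^{-\gamma a\,\Sigma(P)^{-1/k}}\,\Sigma(P)^{-m/k}/m \to \Sigma(P)^{-m/k}/m$ as $a \to 0$, and combined with the upper bound this shows $\psi_a(P) \to \Sigma(P)^{-m/k}/m$ pointwise in $P$.

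Finally, applying Fatou's lemma to the nonnegative integrands $P \mapsto \1(P \in S)\,\psi_a(P)$,
\[
  \liminf_{a \to 0} (\gamma a)^{-m}\,\P\!\left(f(Z)=n,\ \sk(Z) \in S,\ \Sigma(Z)^{\frac1k} < a\right)
  \ge \frac{1}{(m-1)!} \int_{\Pns} \1(P \in S)\, \frac{\Sigma(P)^{-m/k}}{m}\,\dint\mu_{n,\sk}(P) = \mu_{n,\sk,\Sigma}(S),
\]
and together with the uniform upper bound this forces the limit to exist and to equal $\mu_{n,\sk,\Sigma}(S)$, as claimed.

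The only point requiring a little care is the limit passage at the end: nothing in the hypotheses prevents $\Sigma(P)^{-m/k}$ from being unbounded over $\Pns$, so $\mu_{n,\sk,\Sigma}(S)$ may be infinite and one cannot invoke dominated convergence. This is why the argument is organised so that the \emph{uniform} upper bound $\psi_a(P) \le \Sigma(P)^{-m/k}/m$ and Fatou's lemma together handle the finite and the infinite case simultaneously (one could equally use monotone convergence applied to the increasing lower bounds $e^{-\gamma a\,\Sigma(P)^{-1/k}}\Sigma(P)^{-m/k}/m$). The translation of the three defining events into the conditions $P \in S$ and $t < a\,\Sigma(P)^{-1/k}$ is routine but hinges on recalling that $\sk$ is invariant under dilations while $\Sigma$ is homogeneous.
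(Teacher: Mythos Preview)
Your proof is correct and takes essentially the same approach as the paper: both start from \eqref{eq:muns}, reduce the question to the one-dimensional integral $a^{-(n-d)}\int_0^{a\Sigma(P)^{-1/k}} e^{-\gamma t}t^{n-d-1}\,\dint t$, and pass to the limit by a monotone-type argument. The only cosmetic difference is that the paper first substitutes $s=t/a$, so that the integrand $e^{-\gamma a s}s^{n-d-1}$ is visibly increasing as $a\downarrow 0$ and monotone convergence handles both the limit and the inequality at once, whereas you achieve the same end by sandwiching $\psi_a(P)$ between $e^{-\gamma a\Sigma(P)^{-1/k}}\Sigma(P)^{-(n-d)/k}/(n-d)$ and $\Sigma(P)^{-(n-d)/k}/(n-d)$ and finishing with Fatou plus the uniform upper bound.
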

    \begin{proof}
      From \eqref{eq:muns}, we have
      \begin{align}
        \notag
        &\P \left(f(Z)=n \,,\, \sk(Z)\in S \,,\, \Sigma(Z)^{\frac1k}<a \right)
        \\ \notag
        &= \frac{\gamma^{n-d}}{(n-d-1)!} \int_{\Pns} \1(P\in S) \int_0^\infty \1\left(\Sigma(tP)^{\frac1k}<a\right) e^{-\gamma t} t^{n-d-1} \,\dint t \,\dint \mu_{n,\sk}(P)
        \\ \label{eq:revisedproof}
        &= \frac{\gamma^{n-d}}{(n-d-1)!} \int_{\Pns} \1(P\in S) \int_0^\infty \1\left(t<\frac{a}{\Sigma(P)^{\frac1k}}\right) e^{-\gamma t} t^{n-d-1} \,\dint t \,\dint \mu_{n,\sk}(P) \,.
      \end{align}
      Note that for any $P\in\Pns$, the substitution $s = \frac{ t }{a } $ gives that 
      \begin{align*}
        a^{-(n-d)} \int_0^\infty \1\left(t<\frac{a}{\Sigma(P)^{\frac1k}}\right) e^{-\gamma t} t^{n-d-1} \,\dint t 
        = \int_0^{\Sigma(P)^{-\frac{1}{k}}} e^{-\gamma a s } s^{n-d-1} \dint s \,,
      \end{align*}
      which converges monotonically to $ \frac{1}{n-d} \Sigma(P)^{-\frac{n-d}k} $, as $ a \to 0 $.
%
      Therefore multiplying both sides of \eqref{eq:revisedproof} by $(\gamma a)^{-(n-d)}$ and applying the monotone convergence theorem yields the assertion.
    \end{proof}
    
    We can now prove Theorem \ref{thm:generalvarphi}.    
    \begin{proof}[Proof of Theorem \ref{thm:generalvarphi}]
      In the case $|\{n : \P(f(Z)=n)>0\}|=1$, only the first part of the theorem matters, and it is a direct consequence of Lemma~\ref{lem:limSigma}.
      Therefore we can assume that  $|\{n : \P(f(Z)=n)>0\}|>1$.
      
      Because of Lemma~\ref{lem:limSigma}, we observe that for any $n$,
      \begin{equation}\label{eq:f>n}
        (\gamma a)^{-(n-d)} \P\left(f(Z)>n \,,\, \Sigma(Z)^{\frac1k}<a\right)
        \leq \sum_{k\geq 1} (\gamma a)^k  \mu_{n+k,\sk,\Sigma}(\Pc_{n+k,\sk}) \,,
      \end{equation}
      and we will see now that, because of the assumption made on $\Sigma$, the series above is finite and tends to $0$ as $a\to0$.
      Since $\Sigma(P)^{\frac1k} \geq c \,\Phi(P) = c$ for any $P\in\Pns$,
      \[
          \mu_{n,\sk,\Sigma}(\Pns)
          = \frac{1}{(n-d)!}\int_{\Pns} \Sigma(P)^{-\frac{n-d}k} \,\dint \mu_{n,\sk}(P) 
          \leq \frac{c^{-(n-d)}}{(n-d)!} \mu_{n,\sk}(\Pns) 
          \leq \frac{c^{-(n-d)}}{(n-d)!} \,.
      \]
      In particular \eqref{eq:f>n} gives
      \begin{align*}
        (\gamma a)^{-(n-d)} \P\left(f(Z)>n \,,\, \Sigma(Z)^{\frac1k}<a\right)
        &\leq \gamma a \sum_{k\geq 0} (\gamma a)^k  \frac{c^{-(n+k+1-d)}}{(n+k+1-d)!} \\
        &\leq \gamma a c^{-(n+1-d)} \exp (c^{-1}\gamma a) 
        \to 0 \,.
      \end{align*}      
      Thus, with Lemma \ref{lem:limSigma}, we have for any $n$ and $S\subset\Kcs$,
      \begin{align}
        \notag &(\gamma a)^{-(n-d)} \P\left(f(Z)\geq n \,,\, \sk(Z)\in S \,,\, \Sigma(Z)^{\frac1k}<a \right)
        \\& \label{sim} = (\gamma a)^{-(n-d)} \P\left(f(Z)=n \,,\, \sk(Z)\in S \,,\, \Sigma(Z)^{\frac1k}<a \right) +o(1) 
        \to \mu_{n,\sk,\Sigma}(S) \,.
      \end{align}
      Therefore
      \begin{align*}
        \P\left(\sk(Z)\in S \mid \Sigma(Z)^{\frac1k}<a \right)
        &= \frac{(\gamma a)^{-(n-d)}  \P\left(f(Z)\geq \nmin  \,,\, \sk(Z)\in S \,,\, \Sigma(Z)^{\frac1k}<a \right)}{(\gamma a)^{-(n-d)}  \P\left(f(Z)\geq \nmin  \,,\, \sk(Z)\in \Kcs \,,\, \Sigma(Z)^{\frac1k}<a \right)} \\
        & \to \frac{\mu_{\nmin ,\sk,\Sigma}(S)}{\mu_{\nmin ,\sk,\Sigma}(\Kcs)} \,,
      \end{align*}
      as $a\to0$.

      It only remains to prove the last point of the theorem.
      We consider a Borel set $ S \subset \Kcs $ for which $ \mu_{ \nmin , \sk , \Sigma } ( S ) > 0 $ and  $ \mu_{ \nmin + 1 , \sk , \Sigma } ( S ) > 0 $.
      Thus, with  \eqref{sim} , we obtain
      \begin{align*}
        &\P\left(f(Z)>\nmin  \,,\, \sk(Z)\in S \mid \Sigma(Z)^{\frac1k}<a\right) 
        \\ & = \gamma a \frac{ ( \gamma a )^{ - ( \nmin + 1 - d ) } \P\left(f(Z)\geq \nmin +1 \,,\, \sk(Z)\in S \,,\, \Sigma(Z)^{\frac1k}<a\right) }{ ( \gamma a )^{ - ( \nmin - d ) } \P\left(f(Z)\geq \nmin  \,,\, \sk(Z)\in\Kcs \,,\, \Sigma(Z)^{\frac1k}<a\right) }
        \\ & \sim \gamma a \, \frac{\mu_{\nmin +1,\sk,\Sigma}(S)}{\mu_{\nmin ,\sk,\Sigma}(S)} \,,      
      \end{align*}
      as $a\to0$.
      Note that we can write the asymptotic equivalence because the assumptions made on $ S $ imply that the right hand side is well defined and non zero.
    \end{proof}

  \section{Absolutely continuous directional distribution}
    \label{sec:abscont}
    In this section we assume that $\varphi$ is absolutely continuous with respect to the spherical Lebesgue measure.
    Almost surely each cell of the tessellation contains a unique ball of maximal volume called the inball of the cell.
    Thus, we can assume the centering function $\ck$ to be the center of the inball.
    In the first subsection we show that cells with small $\Sigma$-content are simplices of a random shape.
    In the second subsection we investigate the speed of convergence of $\P(f(Z)>d+1 \mid \Sigma(Z)<a)$ to $0$, as $a\to0$.
     
    \subsection{Absolutely continuous case: Cells with small \texorpdfstring{$\Sigma$}{Sigma}-content are simplices with random shape}
      \sectionmark{Absolute continuous case}
      
      Theorem 10.4.6 of \cite{SchneiderWeil08} describes the distribution of the typical cell of a stationary isotropic hyperplane tessellation.
      As observed after the proof of the theorem in \cite{SchneiderWeil08}, it easily extends to the non isotropic case if $\varphi$ is absolutely continuous with respect to the spherical Lebesgue measure.
      It states that for any Borel set $A\subset\Kc$,
      \begin{equation} \label{eq:Calka} 
        \begin{split}
          \P(Z\in A)
          &= \frac{\gamma^{d+1}}{(d+1) \gamma^{(d)}}
          \int_{\Pstraight} \int_0^\infty 
          e^{-\gamma r}
          \P \left( \bigcap_{H\in \eta \cap \Fc^{r\B}} H^- \cap (r \simplex(\vvectu)) \in A \right) 
          \dint r 
          \\& \qquad \times \Delta_d(\vvectu) \, \dint \varphi^{d+1} (\vvectu) \,,
        \end{split} 
      \end{equation}
      where $\eta \cap \Fc^{r\B}$ denotes the set of hyperplanes of the process $\eta$ which do not intersect the ball $r\B$, 
      \[
        \vvectu = (u_0,\ldots,u_d) \,,
      \]
      \begin{equation} \label{eq:defP}
        \Pstraight 
        = \left\{ (\vvectu)\in(\S^{d-1})^{d+1} : \vect u_0,\ldots , \vect u_d \text{ are not all in one closed half sphere} \right\} \,,
      \end{equation}
      and
      \begin{equation} \label{eq:Deltad}
        \simplex(\vvectu) = \bigcap_{i=0}^d H(\vect u_i,1)^- ,
        \ \text{ and } \ 
        \Delta_d(\vvectu) = \lambda_d ( \mathrm{ConvexHull}(\vect u_0,\ldots,\vect u_d)) \,.
      \end{equation}

      Note that the factor $ e^{-\gamma r} $ in the integrand above is the probability that no hyperplanes of $ \eta $ intersect the ball $ r B^d $.
      Therefore for any measurable set of simplices $ A \subset \Pc_{d+1} $, any $r>0$, and any $(d+1)$-tuple $ \vvectu \in \Pstraight $, the integrand of \eqref{eq:Calka} is equal to
      $ \1 ( r \simplex(\vvectu) \in A ) \P ( \eta \cap r T (\vvectu) = \emptyset ) = \1 ( r \simplex(\vvectu) \in A ) e^{-\gamma r \Phi(T (\vvectu))}$.
      In particular, for an open Borel set $S\subset \Pc_{\sk}$ and $a>0$, Equation \eqref{eq:Calka} gives
      \begin{align}
        \notag
        &\P\left( f(Z) = d+1 \,,\, 
        \sk(Z)\in S \,,\, 
        \Sigma(Z)^{\frac1k} < a \right)\\
        &\notag= \frac{\gamma^{d+1}}{(d+1) \gamma^{(d)}}
        \int_{\Pstraight} \int_0^\infty 
        e^{-\gamma r \Phi(\simplex(\vvectu))}
        \1\left( \sk(\simplex(\vvectu))\in S \right)
        \1\left( \Sigma(r\simplex(\vvectu))^{\frac1k} < a  \right)
        \dint r 
        \\\notag&\qquad \times \Delta_d(\vvectu) \, \dint \varphi^{d+1} (\vvectu) 
        \\\label{eq:shaped+1}
        &\sim \gamma a \cdot \frac{\gamma^{d}}{(d+1) \gamma^{(d)}}
        \int_{\Pstraight}
        \frac{\Delta_d(\vvectu) }{\Sigma(\simplex(\vvectu))^{\frac1k} }
        \1\left( \sk(\simplex(\vvectu))\in S \right)
        \dint \varphi^{d+1} (\vvectu) \,,
      \end{align}  
      as $a\to 0$.
      
      But, because of \eqref{ineq:sizes}, 
      \[
        \P\left(f(Z)>d+1 \,,\, \Sigma(Z)^{\frac1k}<a\right)
        \leq\P\left(f(Z)>d+1 \,,\, \radius (Z)<\frac{a}{\Sigma(\B)^{\frac1k}} \right) \,,
      \]
      and thus \eqref{eq:Calka} also gives
      \begin{align*}
        &\P \left(f(Z)>d+1 \,,\, \Sigma(Z)^{\frac1k}<a\right)
        \\&\leq \frac{\gamma^{d+1}}{(d+1) \gamma^{(d)}}
        \int_{\Pstraight} \int_0^\infty 
        e^{-\gamma r}
        \P \left(\text{it exists $ H \in \eta $ such that $ H \cap r T ( u_{0:d} ) \neq \emptyset$ } \right)
        \\&\qquad  \times \1\left( r<\frac{a}{\Sigma(\B)^{\frac1k}}  \right)
        \dint r \, \Delta_d(\vvectu) \, \dint \varphi^{d+1} (\vvectu) \,.
      \end{align*}       
      Observe that, for any fixed $\vvectu\in\Pstraight$, the probability in the integrand is decreasing and tends to $0$, as $r\to0$.
      This implies
      \begin{align*}
        &\P \left(f(Z)>d+1 \,,\, \Sigma(Z)^{\frac1k}<a\right)
        \\\notag&\leq \gamma a  \frac{\gamma^{d}}{(d+1) \gamma^{(d)} \Sigma(\B)^{\frac1k}}
        \int_{\Pstraight} 
        \P \left(\text{it exists $ H \in \eta $ such that $ H \cap \frac{a}{\Sigma(\B)^{\frac1k}} T ( u_{0:d} ) \neq \emptyset$ } \right)
        \\&\notag\qquad  \times 
        \Delta_d(\vvectu) \, \dint \varphi^{d+1} (\vvectu) \,.
      \end{align*}
      With the dominated convergence theorem we get that the last integral tends to $0$, as $ a \to 0$.
      Therefore
      \begin{align}
        \P \left(f(Z)>d+1 \,,\, \Sigma(Z)^{\frac1k}<a\right)
        \label{eq:mored+1}&
        =o(\gamma a)\,,
      \end{align} 
      as $a\to0$.
            
      Equations \eqref{eq:shaped+1} and \eqref{eq:mored+1} immediately give the following theorem.
      \begin{thm}\label{them:abscont1}
        If $\varphi$ is absolutely continuous with respect to the spherical Lebesgue measure, and $\Sigma$ is a size functional of degree $k$, then for any Borel set $S\subset\Pc_{\sk}$, we have
        \[
          \P\left(f(Z)=d+1 \mid \Sigma(Z)^{\frac1k}<a\right) \to 1 \,,
        \]
        and
        \[
          \P\left(\sk(Z)\in S \mid \Sigma(Z)^{\frac1k}<a\right)
          \to \frac{\xi_{\varphi,\Sigma}(S)}{\xi_{\varphi,\Sigma}(\Pc_{\sk})} \,,
        \]
        as $a\to0$, where
        \[
          \xi_{\varphi,\Sigma}(S)
          = \int_{\Pstraight}
          \frac{\Delta_d(\vvectu) }{\Sigma(\simplex(\vvectu))^{\frac1k} }
          \1\left( \sk(\simplex(\vvectu))\in S \right)
          \dint r  
          \, \dint \varphi^{d+1} (\vvectu) \,.
        \]
      \end{thm}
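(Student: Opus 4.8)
The plan is to derive both assertions of Theorem~\ref{them:abscont1} directly from the asymptotic estimates \eqref{eq:shaped+1} and \eqref{eq:mored+1} by expressing each conditional probability as a quotient. Since the typical cell is almost surely a $d$-dimensional polytope, $f(Z)\geq d+1$ almost surely, and therefore, for every $a>0$ and every Borel set $S\subset\Pc_{\sk}$,
\[
  \P\!\left(\sk(Z)\in S \,,\, \Sigma(Z)^{\frac1k}<a\right)
  = \P\!\left(f(Z)=d+1 \,,\, \sk(Z)\in S \,,\, \Sigma(Z)^{\frac1k}<a\right) + R_a(S),
\]
where $0\le R_a(S)\le \P\!\left(f(Z)>d+1 \,,\, \Sigma(Z)^{\frac1k}<a\right)=o(\gamma a)$ by \eqref{eq:mored+1}. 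The computation leading to \eqref{eq:shaped+1} uses only the measurability of $S$, not its openness, so that estimate is in fact valid for any Borel set $S\subset\Pc_{\sk}$; combined with the display above it yields
\[
  \frac{1}{\gamma a}\,\P\!\left(\sk(Z)\in S \,,\, \Sigma(Z)^{\frac1k}<a\right)
  \;\to\; \frac{\gamma^{d}}{(d+1)\gamma^{(d)}}\,\xi_{\varphi,\Sigma}(S), \qquad a\to0.
\]

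Before dividing, I would check that $\xi_{\varphi,\Sigma}(\Pc_{\sk})\in(0,\infty)$, which is also what makes the conditioning legitimate. For finiteness, note that $\langle x,u\rangle\le\|x\|\le1$ whenever $\|x\|\le1$, so each $\simplex(\vvectu)=\bigcap_{i=0}^{d}H(u_i,1)^-$ contains the unit ball $\B$; hence $\Sigma(\simplex(\vvectu))\ge\Sigma(\B)>0$ by monotonicity, and together with $\Delta_d(\vvectu)\le\lambda_d(\B)$ this bounds the integrand defining $\xi_{\varphi,\Sigma}$ by the constant $\lambda_d(\B)\,\Sigma(\B)^{-1/k}$, while $\varphi^{d+1}$ is a probability measure. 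For positivity, on the set $\Pstraight$ of \eqref{eq:defP} the vectors $u_0,\dots,u_d$ are affinely independent, so $\Delta_d(\vvectu)>0$ and $\simplex(\vvectu)$ is compact, whence $0<\Sigma(\simplex(\vvectu))^{1/k}<\infty$ and the integrand is strictly positive on $\Pstraight$; moreover $\varphi^{d+1}(\Pstraight)>0$ under the standing hypotheses on $\varphi$ (for absolutely continuous $\varphi$ this is immediate: $\mathrm{supp}\,\varphi$ is symmetric and spans $\R^d$, so $\origin$ lies in the interior of the convex hull of $d+1$ suitably chosen points of $\mathrm{supp}\,\varphi$, and a neighbourhood of the corresponding $(d+1)$-tuple is contained in $\Pstraight$ and has positive $\varphi^{d+1}$-measure). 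In particular $\P(\Sigma(Z)^{\frac1k}<a)>0$ for all sufficiently small $a>0$.

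To finish, I would put $S=\Pc_{\sk}$ in the second display, which gives $\frac{1}{\gamma a}\,\P(\Sigma(Z)^{\frac1k}<a)\to\frac{\gamma^{d}}{(d+1)\gamma^{(d)}}\,\xi_{\varphi,\Sigma}(\Pc_{\sk})>0$, and then take ratios: \eqref{eq:shaped+1} with $S=\Pc_{\sk}$ gives $\P(f(Z)=d+1\mid\Sigma(Z)^{\frac1k}<a)\to1$, while for an arbitrary Borel set $S\subset\Pc_{\sk}$,
\[
  \P\!\left(\sk(Z)\in S \mid \Sigma(Z)^{\frac1k}<a\right)
  \;\to\; \frac{\xi_{\varphi,\Sigma}(S)}{\xi_{\varphi,\Sigma}(\Pc_{\sk})}, \qquad a\to0,
\]
the case $\xi_{\varphi,\Sigma}(S)=0$ being automatically covered. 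Given \eqref{eq:shaped+1} and \eqref{eq:mored+1}, I expect the only point requiring genuine work to be the two-sided bound $0<\xi_{\varphi,\Sigma}(\Pc_{\sk})<\infty$: finiteness is the one-line estimate above based on $\B\subset\simplex(\vvectu)$, while non-vanishing reduces to $\varphi^{d+1}(\Pstraight)>0$, which is precisely the condition guaranteeing that the cells are bounded.
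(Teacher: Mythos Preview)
Your proposal is correct and follows the same route as the paper: the paper's proof is literally the sentence ``Equations \eqref{eq:shaped+1} and \eqref{eq:mored+1} immediately give the following theorem'', and you spell out the ratio argument that makes this precise. Your added verification that $\xi_{\varphi,\Sigma}(\Pc_{\sk})\in(0,\infty)$ is a genuine improvement over what the paper writes down; the finiteness argument via $\B\subset\simplex(\vvectu)$ is clean and correct, and your remark that the derivation of \eqref{eq:shaped+1} needs only measurability of $S$ (monotone convergence, not any topological property) is also correct. One small suggestion: your positivity argument for $\varphi^{d+1}(\Pstraight)>0$ is a bit informal---a quicker route is to take $A=\Pc$ in \eqref{eq:Calka}, which yields $1=\frac{\gamma^{d}}{(d+1)\gamma^{(d)}}\int_{\Pstraight}\Delta_d(\vvectu)\,\dint\varphi^{d+1}(\vvectu)$ and hence $\int_{\Pstraight}\Delta_d\,\dint\varphi^{d+1}>0$; combined with the pointwise lower bound $\Sigma(\simplex(\vvectu))^{-1/k}\ge \big(\Sigma(\B)^{1/k}\Radius(\simplex(\vvectu))\big)^{-1}>0$ on $\Pstraight$ this gives $\xi_{\varphi,\Sigma}(\Pc_{\sk})>0$ without having to exhibit specific points in $\mathrm{supp}\,\varphi$.
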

 
      
    \subsection{Absolutely continuous case: Speed of convergence}
      \sectionmark{Absolute continuous case}
      Our goal now is to find how fast $\P(f(Z) = d+1 \mid \Sigma(Z)^{\frac1k}<a)$ tends to $1$ as $a\to0$.
      Observe that there exists a strictly positive constant $c$ depending on $\varphi$ and $\Sigma$ such that
      \begin{equation}\label{sim2}
        \P\left(\Sigma(Z)^{\frac1k}<a\right) 
        \sim \P\left(f(Z)=d+1\,,\,\Sigma(Z)^{\frac1k}<a\right) 
        \sim c \gamma a \,.
      \end{equation}
      The first equivalence is due to Theorem \ref{them:abscont1}.
      The second one is a special case of $\eqref{eq:shaped+1}$.
      Thus, our problem boils down to study how fast ${\P(f(Z) > d+1 \,,\, \Sigma(Z)^{\frac1k}<a)}$ tends to $0$, as $a\to0$.
      
      Because of \eqref{ineq:sizes}, we have
      \begin{align}\label{eq:SigmaInradius}
        \P(f(Z) > d+1  \,,\, \Sigma(Z)^{\frac1k}<a) &
        \begin{cases}
          \leq \P \left( f(Z) > d+1  \,,\, \radius(Z) < \frac{a}{\Sigma(\B)^{\frac1k}} \right) \\
          \geq \P \left( f(Z) > d+1  \,,\, \Radius(Z) < \frac{a}{\Sigma(\B)^{\frac1k}} \right) \,.
        \end{cases}         
      \end{align}
      This shows that it is essential to study the cases of the inradius and the circumradius.
      Size functionals of the same order as the circumradius were already considered in Section \ref{sec:generalvarphi}, and by essentially the same technics, see in particular \eqref{sim}, there exists a positive constant $c$ such that
      \begin{equation*}
        \P \left( f(Z) > d+1  \,,\, \Radius(Z) < \frac{a}{\Sigma(\B)^{\frac1k}} \right)
        \sim c (\gamma a)^2 \,.
      \end{equation*}
      Combining this with \eqref{sim2} and \eqref{eq:SigmaInradius} leads to
      \begin{align}\label{eq:lowerbound}
        \P\left(f(Z) > d+1 \mid \Sigma(Z)^{\frac1k}<a\right)
        \geq \frac{\P\left( f(Z) > d+1  \,,\, \Radius(Z) < \frac{a}{\Sigma(\B)^{\frac1k}} \right)}{\P\left(\Sigma(Z)^{\frac1k}<a\right)}
        \sim c \gamma a \,.
      \end{align}
      
      It remains to give an upper bound for $\P \left( f(Z) > d+1 \,,\, \radius(Z) < \frac{a}{\Sigma(\B)^{\frac1k}} \right)$, which turns out to be technically more involved and is the essential part of this subsection.
      By \eqref{eq:Calka}, we have
      \begin{align*}
        & \frac{(d+1)}{\gamma} \frac{\gamma^{(d)}}{\gamma^d} \, \P(f(Z) > d+1 \,,\, \radius(Z)<a )
        \\&= \int_{\Pstraight} \int_0^a e^{-\gamma r} \left( 1 - e^{-\gamma r (\Phi(\simplex(\vvectu))-\Phi(\B) )} \right) \dint r \, \Delta_d(\vvectu) \, \dint \varphi^{d+1} (\vvectu) 
        \\&\leq \int_{\Pstraight} \int_0^a 1 - e^{-\gamma r \Phi(\simplex(\vvectu))} \dint r \, \Delta_d(\vvectu) \, \dint \varphi^{d+1} (\vvectu) \,,
      \end{align*}
      where $\Pstraight$, $\simplex(\vvectu)$ and $\Delta_d(\vvectu)$ are defined as in \eqref{eq:defP} and \eqref{eq:Deltad}.
      It is tempting to upper bound the integrand $1 - e^{-\gamma r \Phi(\simplex(\vvectu))}$ by $\gamma r \Phi(\simplex(\vvectu))$, since for $r\to0$ these quantities are equivalent.
      This would lead us to
      \begin{align*}
        &\P(f(Z) > d+1 \,,\, \radius(Z)<a )
        \\&\leq (\gamma a)^2 \frac1{2(d+1)} \frac{\gamma^d}{\gamma^{(d)}} \int_{\Pstraight} \Phi(\simplex(\vvectu)) \Delta_d(\vvectu) \, \dint \varphi^{d+1} (\vvectu) \,,
      \end{align*}
      but this is useless since the integral $\int_{\Pstraight} \Phi(\simplex(\vvectu)) \Delta_d(\vvectu) \, \dint \varphi^{d+1} (\vvectu)$ turns out to diverge.
      We need to consider more carefully the contribution of simplices $\simplex(\vvectu)$ which have a big $\Phi$-content.
      The factor $\Delta_d(\vvectu)$ is not important to get the order of the integral.
      We upper bound it by $\Delta_{\max} = \max_{\vvectu\in \Pstraight} \Delta_d(\vvectu)$.
      So we have
      \begin{align}  \label{eq:UBPfr}
        \begin{split}
          &\frac{(d+1)}{\gamma \Delta_{\max}} \frac{\gamma^{(d)}}{\gamma^d} \, \P(f(Z) > d+1  \,,\, \radius(Z)<a )
          \\&\leq
          \int_{\Pstraight} \int_0^a 1 - e^{-\gamma r \Phi(\simplex(\vvectu))} \dint r \, \dint \varphi^{d+1} (\vvectu)
        \end{split}      
      \end{align}
      In order to go further, we need to prove the following key lemma.
      \begin{lem}
        \label{lem:upboundPhiofDelta}
        Assume that $\varphi$ has a bounded density with respect to the spherical Lebesgue measure.
        There exists a constant $C_\varphi$, only depending on $\varphi$, such that for any increasing function $f\colon \R_+ \to \R_+ $, we have
        \[
          \int_\Pstraight f(\Phi(\simplex(\vvectu))) \, \dint \varphi^{d+1}(\vvectu)
          \leq C_\varphi \int_1^\infty f(t) \frac1{t^2} \,\dint t \,.
        \]
      \end{lem}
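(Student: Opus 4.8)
The plan is to reduce the integral over $\Pstraight$ to a computation about the distribution of the single random variable $\Phi(\simplex(\vvectu))$ when $\vvectu$ is distributed according to $\varphi^{d+1}$, and then to bound the tail of that distribution. Concretely, write $\nu$ for the image measure of $\varphi^{d+1}$ under the map $\vvectu \mapsto \Phi(\simplex(\vvectu))$ on $(1,\infty)$ (note $\Phi(\simplex(\vvectu)) > \Phi(\B) = 1$ always, since $\B \subset \simplex(\vvectu)$, because each $H(\vect u_i,1)^-$ contains $\B$). Since $f$ is increasing and nonnegative, Fubini/layer-cake gives
\[
  \int_\Pstraight f(\Phi(\simplex(\vvectu)))\,\dint\varphi^{d+1}(\vvectu)
  = \int_{(1,\infty)} f\,\dint\nu
  = f(1^+) + \int_1^\infty \nu\big((t,\infty)\big)\,\dint f(t),
\]
so the whole lemma follows once I show $\nu\big((t,\infty)\big) = \varphi^{d+1}\big(\{\vvectu : \Phi(\simplex(\vvectu)) > t\}\big) \leq C_\varphi\, t^{-1}$ for all $t \geq 1$, together with integrating the remaining factor $t^{-1}$ by parts: $\int_1^\infty \nu((t,\infty))\,\dint f(t) \le C_\varphi \int_1^\infty t^{-1}\,\dint f(t) = C_\varphi\big([t^{-1}f(t)]_1^\infty + \int_1^\infty f(t) t^{-2}\,\dint t\big) \le C_\varphi \int_1^\infty f(t)t^{-2}\,\dint t$ after absorbing the boundary term (which is $-f(1)$ up to sign) and $f(1^+)$. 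So the crux is a pure tail estimate.

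For the tail estimate, the geometric input is: $\Phi(\simplex(\vvectu))$ is large precisely when the simplex $\simplex(\vvectu) = \bigcap_{i=0}^d H(\vect u_i,1)^-$ is long and thin, which happens when the $d+1$ normals $\vect u_0,\dots,\vect u_d$ are close to lying in a common closed half-sphere — equivalently, when $\Delta_d(\vvectu) = \lambda_d(\mathrm{ConvexHull}(\vect u_0,\dots,\vect u_d))$ is small, or more precisely when $\origin$ is close to the boundary of that convex hull. I would make this quantitative: show there is a constant depending only on $d$ such that $\Phi(\simplex(\vvectu)) \le \text{const}/\operatorname{dist}(\origin, \partial\,\mathrm{ConvexHull}(\vect u_0,\dots,\vect u_d))$, or some comparable quantity measuring how far $\origin$ sits inside the hull of the normals. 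The half-space $H(\vect u_i, 1)^-$ has width $1$ in direction $\vect u_i$; fixing $d$ of the normals, say $\vect u_1,\dots,\vect u_d$, spanning a "cone" direction, the simplex extends in a direction $\vect v$ roughly opposite to the barycentre of those normals, and its extent in that direction is controlled by how aligned $\vect u_0$ is with $-\vect v$, i.e. by $1/(1 + \langle \vect u_0, \vect v\rangle)$ up to the geometry of the other $d$ normals. This is where I expect the main work: turning the informal "thin simplex $\Leftrightarrow$ degenerate normals" into a clean bound $\Phi(\simplex(\vvectu)) \le g(\vect u_0,\dots,\vect u_d)$ with $g$ explicitly a reciprocal of a "distance to degeneracy."

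Once such a bound is in place, the measure estimate is where the bounded density of $\varphi$ enters: $\varphi^{d+1}$ is absolutely continuous with respect to $(\Hmeas[d-1])^{d+1}$ on $(\S^{d-1})^{d+1}$ with bounded density, so it suffices to bound the $(\Hmeas[d-1])^{d+1}$-measure of $\{\vvectu : g(\vvectu) > t\}$, i.e. the set where the "distance to degeneracy" is $< \text{const}/t$. Integrating out $\vect u_1,\dots,\vect u_d$ first and leaving $\vect u_0$, the condition confines $\vect u_0$ to a spherical slab of width $O(1/t)$ around a great subsphere determined by $\vect u_1,\dots,\vect u_d$ (the set of directions for which $\origin$ would lie on $\partial\,\mathrm{ConvexHull}$), whose $\Hmeas[d-1]$-measure is $O(1/t)$; integrating the remaining $\vect u_1,\dots,\vect u_d$ over the compact sphere contributes only a bounded constant. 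A mild technical point is handling the region of $\Pstraight$ near its boundary (normals nearly in a half-sphere with $\origin$ on the opposite side), but there the simplex actually has bounded $\Phi$-content or the event $\Phi(\simplex)>t$ is empty for large $t$, so it contributes nothing; I would dispose of it by a compactness argument or by noting $\Pstraight$ being open and the bound $g$ blowing up only toward $\partial\Pstraight$. This yields $\varphi^{d+1}(\{\Phi(\simplex(\vvectu)) > t\}) \le C_\varphi/t$ and hence the lemma. The single genuinely nontrivial ingredient is the explicit reciprocal bound on $\Phi(\simplex(\vvectu))$; everything downstream is Fubini, integration by parts, and a spherical-slab volume estimate.
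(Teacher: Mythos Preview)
Your strategy coincides with the paper's: both reduce to the tail estimate $\varphi^{d+1}(\{\vvectu:\Phi(\simplex(\vvectu))>t\})\le C_\varphi/t$ (your layer-cake reduction is equivalent to the paper's approximation of $f$ by indicators $\1_{[t_0,\infty)}$), and both prove that estimate via the link between large $\Phi$-content and near-degenerate normals. The paper makes your convex-hull bound explicit and clean: $\Phi(\simplex(\vvectu))>t$ forces $\simplex(\vvectu)\not\subset t\B$ (since $\Phi(t\B)=t$), hence some vertex $v(\vvectu,i)=\bigcap_{j\neq i}H(u_j,1)$ has norm exceeding $t$; and $\|v(\vvectu,i)\|$ is precisely the reciprocal of the distance from $\origin$ to $\mathrm{aff}(u_j:j\neq i)$, so your quantity $\mathrm{dist}(\origin,\partial\,\mathrm{ConvexHull}(\vvectu))$ equals $1/\max_i\|v(\vvectu,i)\|$ and the desired reciprocal bound holds with constant~$1$.

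The one place your sketch needs correcting is the measure estimate. After a union bound over which vertex is farthest, say $v(\vvectu,0)$, the condition $\|v(\vvectu,0)\|>t$ depends on $u_1,\ldots,u_d$ alone and does \emph{not} involve $u_0$; so there is no slab constraint on $u_0$ (in particular, when $\mathrm{aff}(u_1,\ldots,u_d)$ is already within $1/t$ of the origin, every $u_0$ satisfies the condition, and your slab-for-$u_0$ argument gives nothing). The paper instead integrates $u_0$ out trivially and then bounds the $\sigma^d$-measure of the set of $(u_1,\ldots,u_d)$ whose affine hull lies within $1/t$ of the origin, using Miles' integral transformation (a spherical Blaschke--Petkantschin formula parametrising $d$-tuples on $\S^{d-1}$ by their common affine hyperplane), which delivers the $O(1/t)$ factor directly. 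Your slab heuristic can be repaired along the same lines, but the variable confined to a thin set is one of $u_1,\ldots,u_d$ given the others, not $u_0$.
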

      
      \begin{proof}
        By standard continuity arguments, there exists a constant $ c_0 > 1 $, depending on $ \varphi $ such that $ \Phi ( \simplex ( \vvectu ) ) > c_0 $ for any $ \vvectu \in \Pstraight $.
        To see this, one must extend the map $ \Pstraight \ni \vvectu \mapsto \Phi ( T ( \vvectu ) ) \in [ 0 , \infty ) $ to the compact domain $ ( \S^{ d - 1 } )^{ d + 1 } $ and codomain $ [ 0 , \infty ] $ by defining $  \Phi ( T ( \vvectu ) ) = \infty $ for any $ \vvectu \in ( \S^{ d - 1 } )^{ d + 1 } \setminus \Pstraight $.
        The extended map is continuous and since its domain is compact and $  \Phi ( T ( \vvectu ) ) >  \Phi ( \B ) = 1 $ for any  $ \vvectu \in ( \S^{ d - 1 } )^{ d + 1 } $, there exists a constant $ c_0 $ as described above.
        Therefore we can assume without loss of generality that $ f ( t ) = 0 $ for any $ t \in [ 0 , c_0 ] $.
        
        The increasing function $ f $ can be uniformly approximated by stair functions of the form $ \sum_{ i \in \N }  a_i \1_{ [ t_i , \infty ) } $, where $ a_i \geq 0 $, and $ t_i \geq c_0 $.
        Hence we only need to show that the lemma holds for indicator functions of the form $ \1_{ [ t_0 , \infty ) } $, where $ t_0 \geq c_0 $.
        
        Set $ c > 0 $ such that $ \varphi \leq c \sigma $, where $ \sigma $ denotes the spherical Lebesgue measure.
        We have
        \begin{align*}
          \int_\Pstraight \1_{ [ t_0 , \infty ) } (\Phi(\simplex(\vvectu))) \, \dint \varphi^{d+1}(\vvectu)
          & \leq c^{ d + 1 } \int_\Pstraight \1_{ [ t_0 , \infty ) } (\Phi(\simplex(\vvectu))) \, \dint \sigma^{d+1}(\vvectu) \, .
        \end{align*}
        
        Since $ \Phi : \Kc \to \R $ is increasing with respect to the inclusion and the $ \Phi $-content of a ball is equal to its radius, we have that any set of $ \Phi $-content greater than $ t_0 $ is not included in the ball $ t_0 \B $.
        In particular
        \begin{align*}
          \int_\Pstraight \1_{ [ t_0 , \infty ) } (\Phi(\simplex(\vvectu))) \, \dint \varphi^{d+1}(\vvectu)
          &  \leq c^{ d + 1 } \int_\Pstraight \1 ( \simplex ( \vvectu ) \not\subset t_0 \B ) \, \dint \sigma^{d+1}(\vvectu) \, .
        \end{align*}
        
        For any $\vvectu\in\Pstraight$ and $i\in\{0,\ldots,d\}$, we consider 
        \begin{equation*}
          \vect v(\vvectu , i)
          := \bigcap_{j\in\{ 0, \ldots d \} \setminus \{i\}} H(\vect u_j,1) \,.
        \end{equation*}
        This is the vertex of the simplex $\simplex(\vvectu)$ which is not contained in the face with outward normal vector $\vect u_i$, see Figure \ref{fig:triangle}.
        The condition $ \simplex ( \vvectu ) \not\subset t_0 \B $ is equivalent to saying that the vertex of $ \simplex ( \vvectu ) $ the furthest away from the origin has norm greater than $ t_0 $.
        Thus
        \begin{align*}
          & \int_\Pstraight \1_{ [ t_0 , \infty ) } (\Phi(\simplex(\vvectu))) \, \dint \varphi^{d+1}(\vvectu)
          \\ &  \leq c' \int_\Pstraight \1 ( \simplex ( \vvectu ) \not\subset t_0 \B ) \1 ( \| v ( \vvectu , 0 ) \| \geq \| v ( \vvectu , i ) \| \text{ for any $ i \in [ d ] $ } ) \, \dint \sigma^{d+1}(\vvectu)
          \\ & \leq c' \int_\Pstraight \1 ( \| v ( \vvectu , 0 ) \| \geq t_0 ) \, \dint \sigma^{d+1}(\vvectu) \, ,
        \end{align*}
        where $ c' := c^{ d + 1 } ( d +1 ) $.
        
        Observe that $ \| v ( \vvectu , 0 ) \| \geq t_0 $ if and only if the hyperplane containing the unit vectors $ \vect{ u_1 } , \ldots , \vect{ u_d } $ is at distance less than or equal to $ \frac{ 1 }{ t_0 } $, see Figure \ref{fig:triangle}.
        We denote this distance by $ D ( \origin , \mathrm{aff} ( u_1 , \ldots , u_d) ) $.
        \begin{figure}          
          \begin{tikzpicture}
            \def\r{1.9} 
            \def\a{76} 
            \def\b{165} 
            \pgfmathsetmacro\px{\r/cos(\a)}
            \pgfmathsetmacro\qy{\r/cos(\b-90)}
            \pgfmathsetmacro\t{\r*cos(\a)}
            
            \begin{scope}[overlay]
              \coordinate (p) at (\px,0);
              \coordinate (p1) at (2*\a:\px);
              \coordinate (p2) at (-2*\a:\px);
              \coordinate (q1) at (0,\qy);
              \coordinate (q2) at (2*\b-90:\qy);
          
              \path[name path=lineup] (p)-- (p1);
              \path[name path=linedown] (p)-- (p2);
              \path[name path=lineleft] (q1)-- (q2);
          
              \path[name intersections={of=lineup and lineleft,by={r1}}];
              \path[name intersections={of=linedown and lineleft,by={r2}}];
            \end{scope}
        
            \draw (0,0) node[below left]{$\origin$} circle [radius=\r];
            
            \draw (0,0) -- (\b:\r) node[left]{$\vect u_0$};
              \filldraw (\b:\r) circle (1.5pt);
            \draw (0,0) -- (\a:\r) node[above right]{$\vect u_1$};
              \filldraw (\a:\r) circle (1.5pt);
            \draw (0,0) -- (-\a:\r) node[below right]{$\vect u_2$};
              \filldraw (-\a:\r) circle (1.5pt);
            
            \filldraw (90:\r) circle (1.5pt) node[above]{$\vect u'_1$};
            \filldraw (-90:\r) circle (1.5pt) node[below]{$\vect u'_2$};
            \filldraw (0:\r) circle (1.5pt) node[below right]{$\vect v$};
            
            \draw[red,very thick] (p) -- (r1) -- (r2) -- cycle;
            
            \filldraw (p) circle (1.5pt) node[right,black]{$\vect v(\vvectu,0)$};
            \filldraw (r1) circle (1.5pt) node[above,black]{$\vect v(\vvectu,2)$};
            \filldraw (r2) circle (1.5pt) node[below,black]{$\vect v(\vvectu,1)$};
            
            \draw[dashed] (0,0) -- (p);
            \draw[dashed] (0,-\r) -- (0,\r);
            \draw[dashed] (\a:\r) -- (-\a:\r);
            \draw (\t,0) node[below right,black]{$t$};
          \end{tikzpicture}
          \caption{Construction of $\simplex(\vvectu)$ (thick triangle).}
          \label{fig:triangle}
        \end{figure}
        We have
        \begin{align*}
          & \int_\Pstraight \1_{ [ t_0 , \infty ) } (\Phi(\simplex(\vvectu))) \, \dint \varphi^{d+1}(\vvectu)
          \\ &  \leq c' \int_\Pstraight \1 \left( D ( \origin , \mathrm{aff} ( u_1 , \ldots , u_d) ) \leq \frac{ 1 }{ t_0 } \right) \, \dint \sigma^{d+1}(\vvectu) \, .
        \end{align*}
        
        Note that the integrand does not involve $ u_0 $.
        Now, we release the constraint $ \vvectu \in \Pstraight $ and integrate over the variable $ u_0 $, which does not appear in the integrand.
        This gives
        \begin{align*}
          & \int_\Pstraight \1_{ [ t_0 , \infty ) } (\Phi(\simplex(\vvectu))) \, \dint \varphi^{d+1}(\vvectu)
          \\ &  \leq c'' \int_{ ( \S^{ d - 1 } )^d } \1 \left( D ( \origin , \mathrm{aff} ( u_1 , \ldots , u_d) ) \leq \frac{ 1 }{ t_0 } \right) \, \dint \sigma^d ( \vvectuone ) \, ,
        \end{align*}
        where $ c'' := c' \sigma ( \S^{d-1} ) $.
        
        Using an integral transformation due to Miles, see Theorem 4 of \cite{Miles1971} or Note 6 on page 286 of \cite{SchneiderWeil08} we have
        \begin{align*}
          & \int_\Pstraight \1_{ [ t_0 , \infty ) } (\Phi(\simplex(\vvectu))) \, \dint \varphi^{d+1}(\vvectu)
          \\ & \leq c'' (d-1)! \int_{\S^{d-1}} \int_0^1 \int_{(H(\vect v,t)\cap\,\S^{d-1})^d} \1\left( t \leq \frac{ 1 }{ t_0 } \right) \, \Delta_{d-1}(\vvectuone) \dint {\sigma'_{\vect v,t}}^d (\vvectuone)
          \frac {\dint t}{(1-t^2)^{\frac{d}2}} \dint \sigma(\vect v) \, ,
        \end{align*}
        where $\sigma'_{\vect v,t}$ denotes the spherical Lebesgue measure on the $(d-2)$-dimensional sphere $H(\vect v,t)\cap\S^{d-1}$.
        For any $ \vect v \in \S^{d-1} $ and $ t \in (0,1) $, considering the diffeomorphism 
        \begin{eqnarray*}
          	H(\vect v,0)\cap\,\S^{d-1} & \to & H(\vect v,t)\cap\,\S^{d-1}
          	\\
          	\vect u' & \mapsto & \vect u = t \vect v + \sqrt{1-t^2} \vect u' \,,
        \end{eqnarray*}  
        we get
        \begin{align*}
          &\int_{(H(\vect v,t) \cap\S^{d-1})^d} \Delta_{d-1}(\vvectuone) \, \dint (\sigma'_{\vect v,t})^d (\vvectuone)
          \\&= \int_{ ( H(\vect v,0) \cap \S^{d-1} )^d }
              (1-t^2)^{\frac{d-1}{2}} \Delta_{d-1}(\vvectuone') \left( (1-t^2)^{\frac{d-2}2} \right)^{d}   
              \, \dint {\sigma'_{\vect v,0}}^d (\vvectuone') \, .
        \end{align*}
        Therefore
        \begin{align*}
          & \int_\Pstraight \1_{ [ t_0 , \infty ) } (\Phi(\simplex(\vvectu))) \, \dint \varphi^{d+1}(\vvectu)
          \\ & \leq c'' (d-1)! \int_{\S^{d-1}} \int_0^{ \frac{ 1 }{ t_0 }} \int_{(H(\vect v,0)\cap\,\S^{d-1})^d} \Delta_{d-1}(\vvectuone) \, \dint {\sigma'_{\vect v,0}}^d (\vvectuone)
          ( 1 - t^2 )^{ \frac{ d^2 - 2 d - 1 }{ 2 }} \dint t \, \dint \sigma(\vect v) \, .
        \end{align*}
        
        Recall that $ t_0 \geq c_0 > 1 $, and therefore the factor $ ( 1 - t^2 )^{ \frac{ d^2 - 2 d - 1 }{ 2 }} $ can be uniformly bounded on the interval $ [ 0 , \frac{ 1 }{ t_0 } ] $.
        For an appropriate constant $ c^{(3)} $ this implies
        \begin{align*}
          & \int_\Pstraight \1_{ [ t_0 , \infty ) } (\Phi(\simplex(\vvectu))) \, \dint \varphi^{d+1}(\vvectu)
          \\ & \leq c^{(3)} \int_{\S^{d-1}} \int_0^{ \frac{ 1 }{ t_0 }} \int_{(H(\vect v,0)\cap\,\S^{d-1})^d} \Delta_{d-1}(\vvectuone) \, \dint {\sigma'_{\vect v,0}}^d (\vvectuone) \dint t \, \dint \sigma(\vect v) \, . 
        \end{align*}
        Thus by setting 
        \[ 
          C_\varphi 
          := c^{(3)} \int_{\S^{d-1}} \int_{(H(\vect v,0)\cap\,\S^{d-1})^d} \Delta_{d-1}(\vvectuone) \, \dint {\sigma'_{\vect v,0}}^d (\vvectuone) \, \dint \sigma(\vect v)  \, ,
        \]
        we obtain that
        \begin{align*}
          \int_\Pstraight \1_{ [ t_0 , \infty ) } (\Phi(\simplex(\vvectu))) \, \dint \varphi^{d+1}(\vvectu)
          & \leq C_\varphi \frac{ 1 }{ t_ 0 }
          = C_\varphi \int_0^\infty \1_{ [ t_0 , \infty ) } \frac{ 1 }{ t^2 } \dint t \, ,
        \end{align*}
        which is what we needed to show.
      \end{proof}

      Now that we proved Lemma \ref{lem:upboundPhiofDelta}, we go back to our original problem which is to derive an upper bound for $\P(f(Z) > d+1 \,,\, \radius(Z)<a )$.
      Equation \eqref{eq:UBPfr} and Lemma~\ref{lem:upboundPhiofDelta} give
      \begin{align*}
        \frac{(d+1)}{\gamma \Delta_{\max}} \frac{\gamma^{(d)}}{\gamma^d} \, \P(f(Z) > d+1  \,,\, \radius(Z)<a )
        &\leq C_\varphi \int_1^\infty \int_0^a 1 - e^{-\gamma r t} \dint r \, \frac{1}{t^2} \dint t
        \\&= C_\varphi \left( I_1 + I_2 + I_3 \right) \,,
      \end{align*}
      where
      \begin{align*}
        I_1 
        = \int_1^{\frac1{\gamma a}} \int_0^a \frac{1-e^{-\gamma r t}}{t^2}  \dint r \dint t
        \leq \int_1^{\frac1{\gamma a}} \int_0^a \frac{\gamma r t}{t^2}  \dint r \dint t
        = \frac{\gamma a^2}{2} \ln \left( \frac1{\gamma a} \right) \,,
      \end{align*}
      \begin{align*}
        I_2 
        = \int_{\frac1{\gamma a}}^\infty \int_0^{\frac1{\gamma t}} \frac{1-e^{-\gamma r t}}{t^2}  \dint r \dint t
        \leq \int_{\frac1{\gamma a}}^\infty \int_0^{\frac1{\gamma t}} \frac{\gamma r t}{t^2}  \dint r \dint t
        = \frac{\gamma a^2}{4} \,,
      \end{align*}
      and
      \begin{align*}
        I_3 
        &= \int_{\frac1{\gamma a}}^\infty \int_{\frac1{\gamma t}}^a \frac{1-e^{-\gamma r t}}{t^2}  \dint r \dint t
        = \int_0^a \int_{\frac1{\gamma r}}^\infty \frac{1-e^{-\gamma r t}}{t^2} \dint t \dint r
        = \left( \int_{1}^\infty \frac{1-e^{-t}}{t^2} \dint t \right) \frac{\gamma a^2}{2}
        \leq \frac{\gamma a^2}{2} \,.
      \end{align*}
      Therefore, by setting 
      $
        C'_\varphi
        := C_\varphi \left( \frac12 +\frac14 + \frac12 \right) \frac{\Delta_{\max} \gamma^d}{(d+1)\gamma^{(d+1)}}
      $,
      we have
      \begin{align*}
        \P(f(Z) > d+1  \,,\, \radius(Z)<a )
        \leq C'_\varphi (\gamma a)^2 \ln \left( \frac1{\gamma a} \right) \,,
      \end{align*}
      for $a< e^{-1} \gamma^{-1}$.
      And with \eqref{eq:SigmaInradius}, we obtain
      \begin{align*}
        \P \left( f(Z) > d+1  \,,\, \Sigma(Z)^{\frac1k}<a \right)
        &\leq \P \left( f(Z) > d+1  \,,\, \radius(Z) < \frac{a}{\Sigma(\B)^{\frac1k}} \right) \\
        &\leq C'_\varphi (\gamma \Sigma(\B)^{-\frac1k} a)^2 \ln \left( \frac1{\gamma \Sigma(\B)^{-\frac1k} a} \right) \,,
      \end{align*}
      for $a < \Sigma(\B)^{-\frac1k} e^{-1} \gamma^{-1} $.
      This implies the existence of a constant 
      $
        C_{\varphi,\Sigma}
      $, depending only on $\varphi$ and $\Sigma$, such that
      \begin{align}
        \label{eq:jointNotsimplexSigma}
        \P \left( f(Z) > d+1  \,,\, \Sigma(Z)^{\frac1k} <a \right)
        \leq C_{\varphi,\Sigma} (\gamma a)^2 \ln \left( \frac1{\gamma a} \right) \,,
      \end{align}
      for $a <\frac{1}{2\gamma}$.
      
      Now, we easily get the following theorem.
      \begin{thm}\label{thm:speed}
        Assume that $\varphi$ has a bounded density with respect to the spherical Lebesgue measure
        and that $\Sigma$ is a size functional of degree $k>0$.
        There exist positive constants $c , c' >0$, depending only on $\varphi$ and $\Sigma$, such that
        \[
          c \gamma a 
          \leq \P\left(f(Z)>d+1 \mid \Sigma(Z)^{\frac1k} <a \right) 
          \leq c' \gamma a \ln \left( \frac1{\gamma a} \right) \,,
        \]
      for $a <\frac{1}{2\gamma}$.
        Moreover the lower bound is sharp if $\Sigma=\Radius$. 
      \end{thm}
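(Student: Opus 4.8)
The plan is to get both bounds by dividing joint probabilities that have already been estimated by the asymptotics of the conditioning event, and then to upgrade the resulting $a\to 0$ statements to uniform bounds on $(0,\tfrac1{2\gamma})$ by a routine compactness argument, the constants being allowed to depend on $\varphi$ and $\Sigma$.

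For the upper bound I would combine \eqref{eq:jointNotsimplexSigma}, which gives $\P(f(Z)>d+1,\,\Sigma(Z)^{1/k}<a)\le C_{\varphi,\Sigma}(\gamma a)^2\ln(1/(\gamma a))$ for $a<\tfrac1{2\gamma}$, with \eqref{sim2}, which gives $\P(\Sigma(Z)^{1/k}<a)\sim c\,\gamma a$. Their quotient is exactly $\P(f(Z)>d+1\mid\Sigma(Z)^{1/k}<a)$, so for all sufficiently small $a$ it is at most $c'\gamma a\ln(1/(\gamma a))$. On the remaining compact interval $[a_0,\tfrac1{2\gamma})$, with $a_0>0$ fixed, the conditional probability is at most $1$ while $\gamma a\ln(1/(\gamma a))$ is continuous and bounded below by a positive constant there, so after enlarging $c'$ the bound holds on all of $(0,\tfrac1{2\gamma})$.

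For the lower bound the work is essentially done in \eqref{eq:lowerbound}: restricting the conditioning event shows $\P(f(Z)>d+1\mid\Sigma(Z)^{1/k}<a)\ge \P(f(Z)>d+1,\,\Radius(Z)<a/\Sigma(\B)^{1/k})\,/\,\P(\Sigma(Z)^{1/k}<a)$, the numerator being of order $(\gamma a)^2$ by the argument of Section \ref{sec:generalvarphi} (the circumradius has the same order as $\Phi$) and the denominator $\sim c\,\gamma a$, so the quotient is $\sim c\,\gamma a$. I would again extend this to $(0,\tfrac1{2\gamma})$ by noting that $\P(f(Z)>d+1,\,\Sigma(Z)^{1/k}<a)>0$ for every $a>0$ and that $a\mapsto\P(\Sigma(Z)^{1/k}<a)$ is continuous (no atoms, since $\Phi(Z)$ is absolutely continuous by the complementary theorem and $\Sigma(\cdot)^{1/k}$ is squeezed between multiples of $\radius$ and $\Radius$), so the conditional probability is continuous and strictly positive on the compact interval and hence bounded there below by a positive multiple of $\gamma a$.

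For the sharpness statement with $\Sigma=\Radius$ I would compare the two bounds directly: the discussion preceding the theorem already yields $\P(f(Z)>d+1,\,\Radius(Z)<a)\sim c(\gamma a)^2$, and Theorem \ref{them:abscont1} together with \eqref{sim2} (applied with $\Sigma=\Radius$, $k=1$) gives $\P(\Radius(Z)<a)\sim c'\gamma a$; dividing, $\P(f(Z)>d+1\mid\Radius(Z)<a)\sim c''\gamma a$, which is of the same linear order as the lower bound, so the latter cannot be improved. I do not anticipate any genuine obstacle: all the analytic content sits in Lemma \ref{lem:upboundPhiofDelta} and the estimates of $I_1,I_2,I_3$ established above, and the only point needing (mild) care is the passage from the pointwise $a\to0$ asymptotics to the claimed uniform bounds on $(0,\tfrac1{2\gamma})$, which the compactness remarks above settle.
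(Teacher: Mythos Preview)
Your proposal is correct and follows essentially the same route as the paper: both divide the joint estimate \eqref{eq:jointNotsimplexSigma} by a linear lower bound on $\P(\Sigma(Z)^{1/k}<a)$ for the upper bound, and both invoke \eqref{eq:lowerbound} for the lower bound and the sharpness at $\Sigma=\Radius$. The only cosmetic difference is that the paper obtains $\P(\Sigma(Z)^{1/k}<a)\ge c_2\gamma a$ on the whole range $a<\gamma^{-1}$ directly from \eqref{eq:shaped+1} (via monotonicity in $a$), whereas you pass from the $a\to 0$ asymptotics to the full interval $(0,\tfrac1{2\gamma})$ by a separate compactness/positivity argument; either device works and your more explicit treatment of this extension is, if anything, a bit more careful than the paper's.
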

      \begin{proof}
        The lower bound is \eqref{eq:lowerbound}, which also includes the equality case if $\Sigma=\Radius$.
        Thus we only need to show the upper bound. 
        Equation \eqref{eq:shaped+1} says that
        \[
          \P \left( f(Z)= d+1 \,,\, \Sigma(Z)^{\frac1k} <a \right) 
          \sim c_1 \gamma a \,,
        \]
        as $a\to0$, where $c_1>0$ is a constant depending only on $\varphi$ and $\Sigma$.
        In particular, it exists a constant $c_2>0$, depending only on $\varphi$ and $\Sigma$, such that
        \[
          \P \left( \Sigma(Z)^{\frac1k} <a \right)
          \geq \P \left( f(Z) = d+1 \,,\, \Sigma(Z)^{\frac1k} <a \right) 
          \geq c_2 \gamma a \,,
        \]
        for any $a<\gamma^{-1}$.
        Thus, with \eqref{eq:jointNotsimplexSigma}, we get
        \begin{align*}
          \P \left( f(Z)>d+1 \mid \Sigma(Z)^{\frac1k} <a \right) 
          &= \frac{\P\left( f(Z)>d+1 \,,\, \Sigma(Z)^{\frac1k} <a \right)}{ \P\left(\Sigma(Z)^{\frac1k} <a \right)}
          \\&\leq \frac{C_{\varphi,\Sigma} \left(\gamma a\right)^2 \ln \left( \frac1{\gamma a} \right)}{c_2 \gamma a}
          \\&= \frac{C_{\varphi,\Sigma}}{c_2} \gamma a \ln \left( \frac1{\gamma a} \right) ,
        \end{align*}
        for $a< \gamma^{-1}$.
        This yields the theorem.
      \end{proof}

  \section{Directional distribution with atoms}
    \label{sec:atoms}
    In the two previous sections we have seen that $\P\left(f(Z)=\nmin  \mid \Sigma(Z)^{\frac1k}<a\right)\to1$, as $a\to0$, if 
    \begin{itemize}
      \item either $\Sigma^{\frac1k}>c\Phi$ for some $c>0$,
      \item or $\varphi$ is absolutely continuous with respect to the spherical Lebesgue measure.
    \end{itemize}
    In contrast to these results we present in the following theorem a class of directional distributions for which cells with small inradius do not have with high probability the minimal number of facets.
    This class includes, for example, the normalization of the sum of the spherical Lebesgue measure and a non-zero discrete measure.
    In the planar case, other examples are discrete measures with support of cardinality at least $2\times 3$.
    \begin{thm}\label{them:atoms}
      Assume that $\varphi$ is such that:
      \begin{enumerate}
        \item the support of $\varphi$ includes $d+1$ distinct points which are not all contained in some half sphere,
        \item $\varphi$ has an atom at $u\in\S^{d-1}$, i.e.\ $\varphi(\{u,-u\})>0$.
      \end{enumerate}
      Then
      \[
        \P\left(f(Z)=\nmin  \mid \radius(Z)<a\right)\not\to1 , 
        \text{ as } a\to 0,
      \]
      meaning that, conditionally on the event $\{\radius(Z)<a\}$, the probability that $Z$ has the minimal number of facets possible does not tend to $1$, as $a\to0$.
    \end{thm}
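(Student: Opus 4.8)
The plan is to determine $\nmin$, then sandwich the relevant probabilities: a lower bound $\P(f(Z)>\nmin,\radius(Z)<a)\gtrsim\gamma a$ and an upper bound $\P(\radius(Z)<a)\lesssim\gamma a$, and divide. First note every cell is a bounded $d$‑dimensional polytope, so $f(Z)\ge d+1$ a.s.\ and $\nmin\ge d+1$; assumption (1) gives equality. Indeed, pick $v_0,\dots,v_d\in\operatorname{supp}\varphi$ not contained in a closed half sphere; they positively span $\R^d$, so $T_0:=\bigcap_{i=0}^d H(v_i,1)^-$ is a bounded simplex with $\origin$ in its interior. Since small caps around each $v_i$ have positive $\varphi$‑measure, the Poisson process $\eta$ assigns positive probability to having exactly one hyperplane in each of $d+1$ suitable small open subsets of $\Hc$ and none elsewhere in a prescribed bounded set; this realizes a simplicial cell with centre in $[0,1]^d$, so by \eqref{eq:intSplit} $\P(f(Z)=d+1)>0$ and $\nmin=d+1$.

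For the lower bound, put $p:=\varphi(\{u\})=\varphi(\{-u\})>0$. A bounded polytope with two facets of opposite outer normals $u,-u$ lies in a slab of width $w_u(Z)=h(Z,u)+h(Z,-u)$, so its inradius is at most $w_u(Z)/2$; and cutting that slab down to a bounded set needs at least $d$ further half‑spaces, so such a polytope has at least $d+2>\nmin$ facets. Hence with $n_1:=d+2$,
\[
  \P\!\big(f(Z)>\nmin,\ \radius(Z)<a\big)\ \ge\ \P\!\big(f(Z)=n_1,\ Z\text{ has }\pm u\text{-facets},\ w_u(Z)<2a\big).
\]
Writing $\mu_{n_1,\sk}(\cdot)=\P(f(Z)=n_1,\sk(Z)\in\cdot\,)$ and using \eqref{eq:muns} with $A=\{rP:0<r<a/\radius(P)\}$, the right‑hand side is $\ge\frac{\gamma^{2}}{1!}\big(\int_0^1 te^{-\gamma t}\dint t\big)\,\mu_{n_1,\sk}\big(\{P\text{ has }\pm u\text{-facets},\ \radius(P)<a\}\big)$, so it suffices to show this last quantity is $\ge c\,a$, i.e.\ $\mu_{n_1,\sk}(\{\cdots,\ \radius(P)<\epsilon\})\ge c\,\epsilon$. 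This is where assumption (2) enters: conditioned on having $\pm u$‑facets, $w_u(Z)$ is the gap between two consecutive points of the stationary Poisson process of rate $\gamma p$ given by the signed positions of the $\pm u$‑hyperplanes, and one checks this gap has a density bounded \emph{away from $0$} near $0$ — a thin ``$u$‑slab'' is partitioned entirely into cells spanning it and the linear density of these spanning cells does not vanish as the slab width tends to $0$, so the scaling is $\epsilon^{1}$, not $\epsilon^{2}$. A configuration argument as above (the two $\pm u$‑hyperplanes within distance $\delta$, the other hyperplanes placed so that the cell is a $(d+2)$‑tope with $\Phi(Z)\in[1,2]$, so small $w_u$ forces small $\radius(\sk Z)$) makes this quantitative, giving $\P(f(Z)>\nmin,\radius(Z)<a)\ge c'\gamma a$.

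For the upper bound, \eqref{eq:muns} with the same $A$ gives
\[
  \P(\radius(Z)<a)=\sum_{n}\frac{\gamma^{n-d}}{(n-d-1)!}\int_{\Pns}\int_0^{a/\radius(P)}t^{n-d-1}e^{-\gamma t}\,\dint t\,\dint\mu_{n,\sk}(P),
\]
the sum over $n$ with $\P(f(Z)=n)>0$. Splitting each inner integral at $\radius(P)=a$ and using $\int_0^x t^{m-1}e^{-\gamma t}\dint t\le\min\{x^m/m,\gamma^{-m}(m-1)!\}$, every summand is $O(\gamma a)$ with summable constants \emph{provided} $\mu_{n,\sk}(\{\radius(P)<\epsilon\})\le C_n\epsilon$ for $n\ge d+2$ and $\mu_{d+1,\sk}(\{\radius(P)<\epsilon\})\le C\epsilon^{1+\delta_0}$ for some $\delta_0>0$ (for $n\ge d+2$ the factor $t^{n-d-1}$ with exponent $\ge1$ makes $\int_{\radius>a}\radius^{-(n-d)}\dint\mu_{n,\sk}$ grow only like $a^{1-(n-d)}$, killing the prefactor; for $n=d+1$ the exponent $1+\delta_0$ is needed to avoid a spurious $\log(1/a)$). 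The bound $\mu_{n,\sk}(\{\radius<\epsilon\})\le C_n\epsilon$ should hold because the set of centred $\Phi$‑normalized $n$‑topes of inradius $<\epsilon$ is, near its boundary, a codimension‑one $\epsilon$‑neighbourhood (one linear functional, the width in a facet direction, being small) carrying no concentration of the grain distribution; the extra power for simplices comes from the fact that a simplex cell has at most one facet with an atom normal, hence can only degenerate through its continuously distributed normals, where — exactly as in the absolutely continuous analysis of Section \ref{sec:abscont} — a further power of the small parameter is gained. Combining the two bounds, $\P(f(Z)=\nmin\mid\radius(Z)<a)=1-\P(f(Z)>\nmin,\radius(Z)<a)/\P(\radius(Z)<a)\le 1-c'/C'<1$ for all small $a$, so it cannot tend to $1$.

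The main obstacle is the upper bound, i.e.\ $\P(\radius(Z)<a)=O(\gamma a)$ (equivalently the shape estimates above), for \emph{atomic} $\varphi$, where the closed‑form density \eqref{eq:Calka} is not available. The two natural routes are a Mecke/Palm computation over the generating hyperplanes combined with a Steinhagen‑type inequality — a cell of small inradius lies in a thin slab, which pins one generating hyperplane to an $O(a)$‑window and supplies the extra factor $a$ — or a direct quantitative study of how fast a cell's inradius vanishes as its generating normals approach a non‑spanning configuration, with the simplex case treated separately to harvest the additional $\epsilon^{\delta_0}$. Step 1 and the lower bound are comparatively routine; the only delicate point in the lower bound is the $\epsilon$‑ rather than $\epsilon^2$‑scaling of the slab width, which relies on the de‑biasing remark about thin slabs.
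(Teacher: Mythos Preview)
Your argument has a real gap at the upper bound: you do not prove $\P(\radius(Z)<a)=O(\gamma a)$, you only describe two possible routes and call it ``the main obstacle.'' In fact there is no obstacle at all, because the inradius of the typical cell has an \emph{exact} distribution valid for every directional distribution $\varphi$, atomic or not:
\[
  \P(\radius(Z)<a)=1-e^{-\gamma a}\,,
\]
see Theorem~10.4.8 of \cite{SchneiderWeil08}. (The underlying reason is simply that $\Phi(r\B)=r$ regardless of $\varphi$.) This is what the paper invokes, and it makes your shape-measure estimates $\mu_{n,\sk}(\{\radius<\epsilon\})\le C_n\epsilon$ and the extra $\epsilon^{\delta_0}$ for simplices unnecessary---which is fortunate, since those estimates are not proved in your sketch and would be delicate to establish.

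Your lower bound is morally right---parallel $\pm u$-facets force $f(Z)\ge d+2$ and a thin $u$-slab forces small inradius---but the execution via \eqref{eq:muns} is again incomplete: the displayed inequality with $\int_0^1 te^{-\gamma t}\,\dint t$ requires $w_u(P)\le 2a$ rather than $\radius(P)<a$ (the implication goes the wrong way), and you then still need the unproved bound $\mu_{d+2,\sk}(\{P\text{ has }\pm u\text{-facets},\ w_u(P)<\epsilon\})\ge c\,\epsilon$. The paper avoids shape measures entirely and uses a second explicit representation (Theorem~10.4.7 of \cite{SchneiderWeil08}), which writes $Z$ in distribution as $Z_\origin\cap T_\xi(u_1,\dots,u_d)$ with $(u_1,\dots,u_d)\sim\phi_d$. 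Conditioning on $u_1=u$ (positive $\phi_d$-probability since $u$ is an atom of $\varphi$), the event that the resulting cell has two parallel $u$-facets at distance less than $a$ contains the event that a further $u$-hyperplane of $\eta$ lies within distance $a$, which has probability $1-e^{-\gamma a\varphi(\{u\})}\sim\gamma a\,\varphi(\{u\})$. This yields $\P(Z\in A_{u,a})\ge c\gamma a$ directly, with no shape-measure analysis needed.
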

    \begin{proof}
      The first condition is equivalent to $\nmin=d+1$.
      For $a>0$, set
      \[
        A_{u,a} = \{ P\in\Pc : \text{$P$ has two parallel facets orthogonal to $u$ with distance less than $a$} \}\,.
      \]
      Since a polytope which has a couple of parallel facets cannot be a simplex, we have that
      \begin{equation}\label{eq:atom1}
        \P\left(f(Z)>d+1  \mid \radius(Z)<a\right) 
        \geq \P\left(Z\in A_{u,a}  \mid \radius(Z)<a\right) \,.
      \end{equation}
      But Theorem 10.4.8 of \cite{SchneiderWeil08} (where $\gamma = 2 \widehat{\gamma}$) tells us that $\P(\radius(Z)<a)=1-e^{-\gamma a}\sim \gamma a$ as $a\to0$, and we will show below that, under condition 2,
      \begin{equation}\label{eqtoshow}
        \begin{split}
          &\text{there exists $c>0$ and $a_0>0$ such that}\\
          &\P(Z\in A_{u,a}) > c \gamma a, \text{ for any }a\in(0,\gamma^{-1} a_0) . 
        \end{split}
      \end{equation}       
      Therefore, for $a\in(0,\gamma^{-1} a_0)$,
      \begin{equation*}
        \P\left(Z\in A_{u,a}  \mid \radius(Z)<a\right) 
        = \frac{\P\left(Z\in A_{u,a}\right) }{\P\left(\radius(Z)<a\right) }
        > \frac{c \gamma a}{\gamma a} 
        = c > 0 \,.
      \end{equation*}
      Combined with \eqref{eq:atom1} and the fact that $\nmin=d+1$, it implies the theorem.
      
      \medskip
      It remains to show that \eqref{eqtoshow} holds.
      We will use Theorem 10.4.7 of \cite{SchneiderWeil08} which gives an explicit representation of the typical cell as an intersection of the zero cell $Z_{\origin}$ and $d+1$ halfspaces containing the origin in their boundaries.
      For $\xi,u_1,\ldots,u_d\in\S^{d-1}$ in general position, we denote by $T_\xi(u_1,\ldots,u_d)$ the simplicial cone whose facets are subsets of the hyperplanes $H(u_1,0),\ldots,H(u_d,0)$ and for which the  origin $\origin$ is the unique highest point in direction $\xi$, meaning that $0=\langle \xi , \origin \rangle > \langle \xi , x \rangle $ for any $x\in T_\xi(u_1,\ldots,u_d)\setminus\{\origin\}$.
      With Theorem 10.4.7 of \cite{SchneiderWeil08}, we get that there exist a probability measure $\phi_d$ on $(\S^{d-1})^d$ and a vector $\xi\in\S^{d-1}$ such that, for any $a>0$,
      \begin{align*}
        \P(Z\in A_{u,a})
        &= \int_{(\S^{d-1})^d} \P(Z_{\origin} \cap T_\xi(u_1,\ldots,u_d)\in A_{u,a}) \, \dint \phi_d(u_1,\ldots,u_d)
        \\&\geq 
        \int_{(\S^{d-1})^d} \1(u_1=u) \P(Z_{\origin}\cap T_\xi(u_1,\ldots,u_d)\in A_{u,a}) \, \dint \phi_d(u_1,\ldots,u_d) \,.
      \end{align*}
      Observe that for any fixed $u_2,\ldots,u_d$, and $a\to0$, we have
      \begin{align*}
        \P(Z_{\origin} \cap T_\xi(u,u_2\ldots,u_d)\in A_{u,a})
        &\sim \P(\exists t\in (0,a) : H(u,t)\in \eta)
        = 1-e^{-\gamma a \varphi(\{u\})}
        \\&\sim \gamma a \varphi(\{u\}) \,.
      \end{align*}
      Thus, by setting a constant $c$ such that
      \begin{align*}
        0<c
        &< 
         \varphi(\{u\}) \int_{(S^{d-1})^d} \1(u_1=u) \, \dint \phi_d(u_1,\ldots,u_d) \,, 
      \end{align*}      
      we have  $ \P(Z\in A_{u,a}) > c \gamma a$ for $a$ small enough.
      This is precisely \eqref{eqtoshow}.
    \end{proof}
      
    
  \bibliography{Bibliography.bib}  
  
\end{document}